%
%
%
%



\documentclass[12pt, a4paper]{article}

%
%
%
%


\usepackage[english]{babel} 

\usepackage{microtype} 

\usepackage{amsmath,amsfonts,amsthm} 

\usepackage[svgnames]{xcolor} 

\usepackage[hang, small, labelfont=bf, up, textfont=it]{caption} 

\usepackage{booktabs} 

\usepackage{lastpage} 

\usepackage{graphicx} 

\usepackage{enumitem} 
\setlist{noitemsep} 

\usepackage{sectsty} 
\allsectionsfont{\usefont{OT1}{phv}{b}{n}} 


\usepackage{geometry} 

\geometry{
	top=1cm, 
	bottom=1.5cm, 
	left=2cm, 
	right=2cm, 
	includehead, 
	includefoot, 
}

\setlength{\columnsep}{7mm} 


\usepackage[T1]{fontenc} 
\usepackage[utf8]{inputenc} 

\usepackage{XCharter} 


\usepackage{fancyhdr} 
\pagestyle{fancy} 




\lhead{} 
\chead{\textit{\thetitle}} 
\rhead{} 

\lfoot{} 
\cfoot{} 
\rfoot{\footnotesize Page \thepage\ of \pageref{LastPage}} 

\fancypagestyle{firstpage}{ 
	\fancyhf{}
}


\newcommand{\authorstyle}[1]{{\large\usefont{OT1}{phv}{b}{n}\color{DarkRed}#1}} 

\newcommand{\institution}[1]{{\footnotesize\usefont{OT1}{phv}{m}{sl}\color{Black}#1}} 

\usepackage{titling} 

\newcommand{\HorRule}{\color{DarkGoldenrod}\rule{\linewidth}{1pt}} 

\pretitle{
	\vspace{-30pt} 
	\HorRule\vspace{10pt} 
	\fontsize{32}{36}\usefont{OT1}{phv}{b}{n}\selectfont 
	\color{DarkRed} 
}

\posttitle{\par\vskip 15pt} 

\preauthor{} 

\postauthor{ 
	\vspace{10pt} 
	\par\HorRule 
	\vspace{20pt} 
}


\usepackage{lettrine} 
\usepackage{fix-cm}	

\newcommand{\initial}[1]{ 
	\lettrine[lines=3,findent=4pt,nindent=0pt]{
		\color{DarkGoldenrod}
		{#1}
	}{}%
}

\usepackage{xstring} 

\newcommand{\lettrineabstract}[1]{
	\StrLeft{#1}{1}[\firstletter] 
	\initial{\firstletter}\textbf{\StrGobbleLeft{#1}{1}} 
}


%
%

\usepackage{amssymb}


\newcommand{\C}{\mathbb C}

\newcommand{\R}{\mathbb R}

\newcommand{\Z}{\mathbb Z}

\newcommand{\rank}{\mbox{rank}}

\newcommand{\im}{\mbox{im}}

\newtheorem{Theorem}{Theorem}  
\newtheorem{Proposition}{Proposition}
\newtheorem{Corollary}{Corollary}
\newtheorem{Lemma}{Lemma}

\usepackage{amsbsy}


\title {On the Equilibrium Locus of a Parameterized Dynamical System 
with Independent First Integrals} 

\author{
	\authorstyle{Yirmeyahu Kaminski\textsuperscript{1} and Pierre Lochak\textsuperscript{2}} 
	\newline\newline 
	\textsuperscript{1}\institution{Holon Institute of Technology, Holon, Israel}\\ 
	\textsuperscript{2}\institution{Sorbonne Universit\'e, Paris, France}\\ 
}

\date{\today} 


\begin{document}

\maketitle 

\thispagestyle{firstpage} 


\lettrineabstract{}

\abstract{For a family of dynamical systems with $k > 0$ independent first integrals evolving in a compact region of an Euclidean space, we study the equilibrium locus. We show that under mild and generic conditions, it is a smooth manifold that can be viewed as the total space of a certain fiber bundle and that this bundle comes equipped with a natural connection. We then proceed to show parallel transport for this connection does exist and explore some of its properties. In particular, we elucidate how one can to some extent measure the variation of the system eigenvalues restricted to a given fiber.}


\section{Introduction}
\label{}

We study the equilibrium locus of a family of dynamical systems with $k>0$ independent first integrals; in other words we explore the zero set of a vector field in Euclidean space, with at least one first integral, giving rise to a foliation of the ambient space. More precisely the system is assumed to evolve inside a compact set $V\subset \R^n$ of maximal dimension, whose boundary $\partial V$ has the structure of a stratified manifold. 

We show that if the vector field is analytic and satisfies some genericity conditions, the equilibrium locus $E$ is a smooth manifold of dimension $m+k$, where $m$ is the number of parameters and $k$ the number of independent first integrals. Then we focus on the projection from $E$ onto the space of parameters $\Lambda$. We prove that this projection has the structure of a fiber bundle,  that we call \textit{the equilibrium bundle} , and that an Erhesmann connection can be naturally associated to it. We explicitly prove that parallel transport exists for this connection and discuss its holonomy group. Then we prove that it can be made equivalent to a Riemannian submersion provided one chooses an appropriate metric on $E$. Finally we show how one can extract some information about the variation of the (nonzero) eigenvalues  of the  Jacobian matrix of the vector field along a loop embedded in a fiber of the above projection.

The viewpoint developed in this article does not seem to appear as such in the literature. Yet, it is clearly connected with works in dynamical systems in general, more specifically differential equations. It is worth in this respect quoting papers by the Russian school. In particular, several articles by Valery Romanovski and coworkers provide a useful entry point (~\cite{ZRY}). Equilibria of systems admitting at least one first integral have been studied in particular in~\cite{Du} and related papers (see the reference list of~\cite{Du}). A more topological viewpoint is developed in~\cite{DT} and other papers by these authors (see again their reference list). It is also worth noting that the stability problem for equilibria is pervasive in the literature, at least since ... Poincaré. In the present paper it is addressed in section~\ref{sec::eigenvalues}. Finally we remark that a natural generalization of the considerations of the present paper would consist in addressing the same issues for more general trajectories, starting with the periodic ones.

The paper is organized as follows. In sections~\ref{sec-model} and~\ref{sec-nature-Lambda}, we lay down all the definitions and assumptions, and we deduce the first consequences. The other sections are the core of our work. Sections~\ref{sec::fiber_bundle} and~\ref{sec::parallel_transport} deal with the definition of \textit{the equilibrium bundle}, the natural connection on it and the construction of the parallel transport. In section~\ref{sec::riemannian_submersion}, the connection is analyzed from the perspective a Riemannian submersion. Section~\ref{sec::eigenvalues}, we discuss the variation of the non zero eigenvalues of the vector field along a loop embedded in a fiber of the equilibrium bundle.   

In section~\ref{sec-examples}, we are finally in a position to discuss examples and possible applications. The first example was investigated in~\cite{Kaminski-2018}. It is a dynamical system that describes the behavior of circular net of cells. Our work can therefore be seen as a vast generalization of this particular case. We also discuss other possible examples and applications.

\section{Definitions and first properties}
\label{sec-model}

Let us consider an $n-$dimensional $(n>0)$ smooth dynamical model $\dot{x} = f(\lambda,x)$, depending on $m$ parameters 
$\lambda$ $(m>0)$. We make the following assumptions:
\begin{enumerate}
\label{hypotheses}
\item $\lambda \in \Lambda$, where $\Lambda$ is a connected open set of $\R^m$ ($m>0$),

\item $x \in V$, where $V \subset \mathbb{R}^n$, is a compact submanifold of $\R^n$ of codimension zero, whose boundary $\partial V$ is a stratified manifold. The system is assumed to stay inside $V$, that is on $\partial V$, the vector field either points inward or is tangent to $\partial V$. 

\item \label{hyp-indep-first-int} There exist $k$  smooth  independent first integrals $h_1, \cdots, h_k$ $(k>0)$, independent of the parameter(s) $\lambda$, so that for every $i$, $h_i : \R^n \rightarrow \R$ is a smooth function.  Here the independence of the functions $h_1, \cdots, h_k$ means that their gradients are linearly independent at every points.
\end{enumerate}

Writing $M = \Lambda \times V$, for $(\lambda,x)\in M$, $f(\lambda,x)\in T_xV \simeq \R^n$, the tangent space of $V$ at $x$, so that we may and do consider $f$ as a smooth map $M \rightarrow  \R^n$ giving the components of the "velocity" at the point $x\in V$ with parameter $\lambda\in\Lambda$. We denote by $E\subset M$ the equilibrium locus defined by the vanishing 
condition $f(\lambda,x)=0$ and assume throughout that $E$ is not empty.

\begin{Proposition}
\label{prop-rank-lemma}
In the setting described by assumption~\ref{hyp-indep-first-int} above, the Jacobian matrix $\frac{\partial f}{\partial x}$ has rank at most $n-k$ on E.  
\end{Proposition}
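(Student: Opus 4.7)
The plan is to exploit the conservation laws $h_i \circ \varphi_t \equiv h_i$ (where $\varphi_t$ is the flow) in their infinitesimal form, which yields the identities
\begin{equation*}
\nabla h_i(x)^{\mathsf T}\, f(\lambda,x) \;=\; 0, \qquad i=1,\ldots,k,
\end{equation*}
valid on all of $M=\Lambda\times V$ (not just on $E$). This is the basic input; everything else is a computation from it.

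Next, I would differentiate each of these $k$ scalar identities with respect to $x$. Applying the product rule, one obtains on $M$
\begin{equation*}
f(\lambda,x)^{\mathsf T}\, H_i(x) \;+\; \nabla h_i(x)^{\mathsf T}\, \frac{\partial f}{\partial x}(\lambda,x) \;=\; 0,
\end{equation*}
where $H_i$ denotes the Hessian of $h_i$. Restricting this identity to the equilibrium locus $E$, where by definition $f(\lambda,x)=0$, the first term disappears and we are left with
\begin{equation*}
\nabla h_i(x)^{\mathsf T}\, \frac{\partial f}{\partial x}(\lambda,x) \;=\; 0 \qquad \text{for every } (\lambda,x)\in E, \; i=1,\ldots,k.
\end{equation*}

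To conclude, I would interpret this as saying that each covector $\nabla h_i(x)$ lies in the left kernel of the Jacobian $\frac{\partial f}{\partial x}(\lambda,x)$. By assumption~\ref{hyp-indep-first-int}, the gradients $\nabla h_1(x),\ldots,\nabla h_k(x)$ are linearly independent at every point of $V$, so the left kernel has dimension at least $k$. Equivalently, the image of $\frac{\partial f}{\partial x}(\lambda,x)$ is contained in the $(n-k)$-dimensional subspace $(\mathrm{span}\{\nabla h_1(x),\ldots,\nabla h_k(x)\})^{\perp}$, which bounds the rank by $n-k$.

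There is no real obstacle; the only point requiring a little care is the bookkeeping around the product rule (keeping track of which derivatives act on which factor and of row/column conventions), and the observation that one must evaluate on $E$ before the Hessian term can be discarded — off $E$ the identity still holds but couples the Hessian of $h_i$ with the value of $f$, which is precisely what prevents the bound from being valid on all of $M$.
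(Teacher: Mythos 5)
Your proposal is correct and follows essentially the same route as the paper: differentiate the conservation identity $\nabla h_i^{\mathsf T} f = 0$ in $x$, discard the Hessian term on $E$ where $f=0$, and conclude from the linear independence of the gradients that the (left) kernel of $\frac{\partial f}{\partial x}$ has dimension at least $k$. The only difference is notational (left kernel of the Jacobian versus kernel of its transpose), which is immaterial.
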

\begin{proof}
Consider $1 \leq l \leq k$ and $\lambda \in \Lambda$. Then a solution $x(t)$ of the system satisfies $h_l(x(t))=Cst$ and taking the derivative with respect to $t$, we get: $f(\lambda,x(t))\cdot \nabla h_l(x(t))=0$ where the dot denotes the usual scalar product on $\R^n$. Since through every point $x \in V$ and any $\lambda \in \Lambda$, there is an integral curve of the vector field $f(\lambda,.)$ passing through $x$, this is valid for every $\lambda \in \Lambda$ and $x\in V$:
$$
\forall \lambda \in \Lambda, \forall x \in V, <f(\lambda,x),\nabla h_l(x) > = 0.
$$ 
Taking a further derivative, we get: 
$$
\sum_{i=0}^n \left (\frac{\partial f_i}{\partial x_j} \frac{\partial h_l}{\partial x_i} + f_i \frac{\partial^2 h_l}{\partial x_j \partial x_i} \right) = 0,
$$ 
for every $j$ ($1\le j\le n$) where we write $f$ componentwise: $f=(f_1, \cdots, f_n)$. This is equivalent to writing:

\begin{equation}
\label{eq:2nd order-order-der-H}
\left ( \frac{\partial f}{\partial x} \right)^T \nabla h_l + D^2h_l f = 0,
\end{equation}
where $D^2h_l$ denotes the Hessian matrix of $h_l$ and the superscript $T$ denotes transposition (both $\nabla h_l$ and $f$ are regarded as $n$-vectors). Now at an equilibrium point ($f=0$), the second term vanishes and thus the kernel of $\left ( \frac{\partial f}{\partial x} \right)^T$ contains the $k$ vectors $\nabla h_l(x)$. Since the first integrals $h_l$ ($l=1,\ldots, k$) are assumed to be independent, that kernel has at least dimension $k$ and $\rank \left ( \frac{\partial f}{\partial x} \right) \leq n-k$. 
\end{proof}

Observe now that the functions $h_1, \cdots, h_k$ being independent of $\lambda$, the vanishing identities  $\left ( \frac{\partial f}{\partial \lambda} \right)^T \nabla h_l = 0$ are also obtained for $l=1,\ldots,k$. We may therefore require the further:
 
\bigskip

\noindent {\bf Non-degeneracy Conditions:}\label{non-degeneracy-condition} i) The rank of the matrix $\frac{\partial f}{\partial \lambda}$ is constant equal to $\min(m,n-k)$; ii) The rank of the matrix $\frac{\partial f}{\partial x}$ is constant equal to $n-k$ for $(\lambda,x) \in E$ and $n-p(\lambda,x)$ at points $(\lambda,x) \not \in E$, where $p(\lambda,x) \leq k$; iii) For every $(\lambda,x) \in E$, $\ker \left ( \frac{\partial f}{\partial x}(\lambda,x) \right) \oplus \im \left ( \frac{\partial f}{\partial x}(\lambda,x) \right) = \R^n$.
\bigskip 

\noindent \textbf{Remarks:}
Conditions i) and ii) simply express the fact that there are no further degeneracies, beyond the one exhibited in Proposition~2.1. The third condition iii) requires that the kernel and image of the partial Jacobian matrix $\frac{\partial f}{\partial x}$ be transverse subspaces of $\R^n$, which is true for a dense open set of the matrices with a fixed a given rank. 
\medskip

\noindent \textbf{Examples:}
\begin{enumerate}
\item Consider the following vector field in $\R^2$: $f = (v(\lambda,x,y),0)$, such that for all $\lambda \in \Lambda = \R$, $v(\lambda,x,y)x \leq 0$ when $x^2+y^2=1$, so that $V$ can be taken to be the Euclidean unit ball of $\R^2$. Obviously, $h_1 = y$ is a first integral and 
$$
\frac{\partial f}{\partial x} = \left ( \begin{array}{cc}
\frac{\partial v}{\partial x}  & \frac{\partial v}{\partial y} \\ 
0 & 0
\end{array} \right).
$$
Of course, $\left(\frac{\partial f}{\partial x} \right)^T \nabla h_1 = 0$. A similar result is obtained if one consider $\frac{1}{2} y^2$ instead of $y$. The set of equilibrium points $E = \{(\lambda,x,y) | v(\lambda,x,y) = 0\}$ has dimension $2$. 

\item Consider now a vector field $f$ is $\R^3$, such that $h_1 = x^2 + y^2 + z^2$ and $h_2 = 4x^2 + 4y^2 + z^2/4$ are first integrals. These functions are obviously differentially independent, outside the union of the plane $z=0$ and the line $x=y=0$. Take $V$ to be the union of the points $\{(x,y,z)\}$ such that $h_1 = R_1$ and $h_2 = R_2$, with $R_1,R_2 \in [1,3] \times [5,15]$ and $R_2 \geq 5R_1$.

Then let us consider $f = (-\lambda y(z-x),\lambda x(z-x),0))$ with $\Lambda = \R_+^*$; it is straightforward to compute 
$$
\frac{\partial f}{\partial x}  = \left ( \begin{array}{ccc} 
\lambda y & -\lambda (z-x) & -\lambda y \\
\lambda z - 2 \lambda x & 0 &  \lambda x\\
0  & 0 & 0
\end{array}
\right) 
$$
and $E = \{(\lambda,x,y,z) \in \Lambda \times V  \vert x=z\}$. Then for $(\lambda,x,y,z) \in E$, as expected: $\left(\frac{\partial f}{\partial x} \right)^T \nabla h_1 = \left(\frac{\partial f}{\partial x} \right)^T \nabla h_2 = 0$. Also, obviously, $\dim(E) = 3$.

\end{enumerate}

\begin{Proposition}
\label{prop-Emanifold}
Consider the map $f : M=\Lambda \times V \rightarrow \R^n$ and assume the nondegeneracy 
conditions above hold true. If $f$ is an {\rm analytic} vector field, the equilibrium locus $E = f^{-1}(0)$ is a smooth real analytic variety of dimension $m+k$.

\end{Proposition}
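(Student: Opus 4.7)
The plan is to show that $E$ is cut out by $n-k$ effectively independent analytic equations, by first observing that $f$ takes values in a rank $(n-k)$ analytic subbundle of $M\times\R^n$, and then invoking the analytic implicit function theorem.

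First I would note that the identity $\langle f(\lambda,x),\nabla h_l(x)\rangle = 0$ obtained in the proof of Proposition~\ref{prop-rank-lemma} holds at every point of $M$, not merely on $E$. Hence for every $(\lambda,x)\in M$, the vector $f(\lambda,x)$ lies in the $(n-k)$-dimensional subspace $W(x) = \mathrm{span}\{\nabla h_1(x),\ldots,\nabla h_k(x)\}^\perp$, which varies analytically with $x$ by independence of the gradients. Around any point $x_0\in V$, I would fix an analytic orthonormal frame $v_1(x),\ldots,v_{n-k}(x)$ of $W(x)$ on a neighborhood $U$ (for instance by Gram--Schmidt applied to an analytic completion), and write $f(\lambda,x) = \sum_{j=1}^{n-k} g_j(\lambda,x)\, v_j(x)$ with analytic coefficient functions $g_j : \Lambda \times U \to \R$. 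In this local trivialization, $E\cap(\Lambda\times U) = g^{-1}(0)$ where $g=(g_1,\ldots,g_{n-k})$.

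The crux is then to show that the differential $Dg$ has rank $n-k$ at every point of $E\cap(\Lambda\times U)$. Differentiating $f=\sum_j g_j v_j$ and evaluating at a point of $E$, the contributions coming from the derivatives of the frame $v_j$ drop out because every $g_j$ vanishes there; what remains is $\partial f/\partial x_i = \sum_j (\partial g_j/\partial x_i)\, v_j(x)$, and analogously in the $\lambda$ variables. Since the vectors $v_j(x)$ are linearly independent, the rank of $\partial f/\partial x$ on $E$ coincides with the rank of $\partial g/\partial x$, which by non-degeneracy condition (ii) equals $n-k$. The analytic implicit function theorem then yields that $E\cap(\Lambda\times U)$ is an analytic submanifold of $M$ of codimension $n-k$, hence of dimension $(m+n)-(n-k)=m+k$; patching across local neighborhoods gives the global statement.

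I expect the main subtlety to lie in the rank identification between $Df$ and $Dg$ at equilibria: it rests on exploiting $f=0$ on $E$ to discard the terms $g_j\,\partial v_j/\partial x_i$, without which the rank of $Df$ could a priori drop upon passing to $Dg$. Once this reduction is made, condition (ii) feeds directly into the implicit function theorem and the remainder is routine. Conditions (i) and (iii) play no role here; they will presumably come into play only when the bundle structure $E\to\Lambda$ is examined in the subsequent sections.
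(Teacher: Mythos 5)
Your proof is correct, and it is in fact more careful than the paper's own argument, though both rest on the same core computation. The paper's proof is a one-liner: by condition (ii) together with the identities $\left(\partial f/\partial x\right)^T\nabla h_l=0$ and $\left(\partial f/\partial\lambda\right)^T\nabla h_l=0$ on $E$, the kernel of $J^T$ (with $J$ the full Jacobian in $(\lambda,x)$) is exactly the span of the $\nabla h_l$, so $\mathrm{rank}(J)=n-k$ along $E$, and the conclusion is asserted directly from this. Taken literally, that last step is not a valid general principle: constancy of $\mathrm{rank}(Df)$ on the level set $f^{-1}(0)$ alone --- as opposed to on a neighbourhood, which condition (ii) does not guarantee, since the rank may jump up off $E$ --- does not force the level set to be a manifold of the corresponding codimension (consider $(x,y)\mapsto(x^2-y^2,2xy)$ on $\R^2$, whose differential has constant rank $0$ on its zero locus, a single point). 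What rescues the argument is precisely the structure you isolate: $f$ takes values everywhere on $M$ in the rank-$(n-k)$ subbundle $\{\nabla h_1,\dots,\nabla h_k\}^\perp$, so that locally $E$ is the zero set of an honest submersion $g=(g_1,\dots,g_{n-k})$ and the regular value theorem applies; your identification $\mathrm{rank}(\partial f/\partial x)=\mathrm{rank}(\partial g/\partial x)$ at equilibria, obtained by discarding the $g_j\,\partial v_j/\partial x_i$ terms because $g$ vanishes there, is exactly where condition (ii) enters and is correct. Two minor remarks: the first integrals are only assumed smooth in Section~\ref{sec-model}, so your frame and coefficients are a priori only smooth rather than analytic --- harmless for the ``smooth manifold of dimension $m+k$'' conclusion, while the ``real analytic variety'' qualifier follows, as in the paper, simply from $E=f^{-1}(0)$ with $f$ analytic; and your observation that conditions (i) and (iii) are not needed here agrees with what the paper actually uses.
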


\begin{proof}
	Since $f$ is analytic, $E$ is analytic by definition. Moreover the Jacobian matrix of $f$ has constant rank along $E$. Indeed let $J$ be this Jacobian matrix: $J = \left (\frac{\partial f}{\partial \lambda} , \frac{\partial f}{\partial x} \right)$. Then by the non-degeneracy condition, the kernel of the transpose $J^T$ is precisely the span of $\nabla h_l$ for $1 \leq l \leq k$. Therefore $\rank(J) = n-k$ along $E$ which is a smooth analytic variety of dimension $m+n - (n+k) = m + k$.  
\end{proof}

This proposition fails to be true if the field $f$ is smooth but not analytic. Indeed for such a field there may exist an open set $U \subset M$ included in $f^{-1}(0)$. In that case, the equilibrium locus will have dimension $m+n > m+k$.

\textbf{We assume from now on that $f$ is analytic and write $\boldsymbol{ \pi: E \subset \Lambda \times V \rightarrow \Lambda}$}. \textit{This is an analytic and proper map whose image we denote by $\Lambda_0\subset \Lambda$.}


\section{On the nature of $\Lambda_0$}
\label{sec-nature-Lambda}

We start with the following observation.

\begin{Lemma}
The set $\Lambda_0$ is closed in $\Lambda$. 
\end{Lemma}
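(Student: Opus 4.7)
The plan is to exploit compactness of $V$ directly: the projection $\mathrm{pr}_1: \Lambda \times V \to \Lambda$ is a closed map because $V$ is compact (this is the tube lemma / standard fact that projection off a compact factor is closed), and $E$ is closed in $\Lambda\times V$ as the zero locus of the continuous map $f$. Since $\Lambda_0 = \mathrm{pr}_1(E) = \pi(E)$, it is the image of a closed set under a closed map, hence closed in $\Lambda$.

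Concretely, I would write this out via sequences to keep the argument self-contained. Take a sequence $(\lambda_n)_{n\ge 1}$ in $\Lambda_0$ that converges in $\Lambda$ to some limit $\lambda_\infty \in \Lambda$; the goal is to show $\lambda_\infty\in\Lambda_0$. By definition of $\Lambda_0$, for each $n$ there exists $x_n \in V$ with $(\lambda_n, x_n)\in E$, that is, $f(\lambda_n, x_n) = 0$. Because $V \subset \R^n$ is compact, the sequence $(x_n)$ admits a subsequence $(x_{n_j})$ converging to some $x_\infty \in V$.

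Now pass to the limit in the equality $f(\lambda_{n_j}, x_{n_j}) = 0$: continuity of $f$ on $\Lambda \times V$ gives $f(\lambda_\infty, x_\infty) = 0$, so $(\lambda_\infty, x_\infty) \in E$ and therefore $\lambda_\infty = \pi(\lambda_\infty, x_\infty) \in \Lambda_0$. This proves $\Lambda_0$ is closed in $\Lambda$.

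There is essentially no obstacle here: the whole argument rests only on compactness of $V$ and continuity of $f$, both of which are built into the standing assumptions laid out in Section~\ref{sec-model}. It is worth noting that this is exactly the statement that $\pi$ is a \emph{proper} map, which was already asserted in the sentence immediately preceding the lemma; the lemma simply records the topological consequence (closedness of the image) that will be used in the sequel. No appeal to analyticity, the non-degeneracy conditions, or Proposition~\ref{prop-Emanifold} is needed at this stage.
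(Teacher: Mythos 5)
Your proof is correct and rests on the same idea as the paper's: compactness of $V$ plus continuity of $f$ makes $\pi$ proper, hence its image closed. The paper's detailed version argues contrapositively (the complement of $\Lambda_0$ is open, via a finite subcover of $V$) while you argue directly via sequential compactness, but in $\R^n$ these are interchangeable formulations of the same argument.
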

\begin{proof}
This is a direct consequence of the fact that $\pi$ is proper. Alternatively in a more down-to-earth and detailed way, consider
$\lambda \in \Lambda \setminus \Lambda_0$, i.e. for every $x$ in $V$, $f(\lambda,x) \neq 0$. By continuity, there are open sets $U_x$ and $V_x$ respectively in $\Lambda$ and $V$, such that for all $(\mu,y) \in U_x \times V_x$, $f(\mu,y) \neq 0$. The collection $\{V_x\}_{x \in V}$ is an open cover of $V$. Since $V$ compact we can extract a finite subcover, $V_{x_1}, \cdots, V_{x_p}$. Writing $U = U_{x_1} \cap \cdots \cap U_{x_p}$, for all $(\mu,x) \in U \times V$, $f(\mu,x) \neq 0$ which shows that $\Lambda \setminus \Lambda_0$ is open in $\Lambda$.  
\end{proof}

\begin{Lemma}
\label{prop-fiber1}
For $\lambda \in \Lambda_0$, the fiber $E_\lambda$ of $\pi$ over $\lambda$ is a smooth real analytic $k$-dimensional manifold. 
\end{Lemma}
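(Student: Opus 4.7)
The idea is to derive this from Proposition~\ref{prop-Emanifold} by showing that the restriction $\pi|_E : E \to \Lambda$ is a submersion at every point of $E$. Once this is established, the regular value theorem (in the real analytic category) immediately gives that every non-empty fiber $E_\lambda = (\pi|_E)^{-1}(\lambda)$ is a smooth real analytic submanifold of $E$ of dimension $(m+k) - m = k$, which is what we want.

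To carry out the submersion step, fix $(\lambda,x) \in E$ and write $J = (J_\lambda, J_x)$ with $J_\lambda = \partial f/\partial \lambda$ and $J_x = \partial f/\partial x$. By (the proof of) Proposition~\ref{prop-Emanifold} we have $T_{(\lambda,x)}E = \ker J(\lambda,x)$, and the differential of $\pi|_E$ at $(\lambda,x)$ is simply the restriction to this kernel of the natural projection $\R^m \times \R^n \to \R^m$, $(v,w) \mapsto v$. Its surjectivity is therefore equivalent to the assertion that for every $v \in \R^m$ there exists $w \in \R^n$ with $J_x w = -J_\lambda v$, i.e.\ to the inclusion $\im(J_\lambda) \subseteq \im(J_x)$.

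To establish this inclusion I would combine the two consequences of the first integrals already exposed in the excerpt. On $E$, Proposition~\ref{prop-rank-lemma} yields $J_x^T \nabla h_l = 0$ for $l = 1,\dots,k$; the non-degeneracy condition (ii) forces $\ker J_x^T$ to have dimension exactly $k$, and the assumed independence of the $\nabla h_l$ then promotes the a priori inclusion $\mathrm{span}(\nabla h_1,\dots,\nabla h_k) \subseteq \ker J_x^T$ to an equality. Consequently $\im(J_x) = \mathrm{span}(\nabla h_1,\dots,\nabla h_k)^\perp$. On the other hand, differentiating $f(\lambda,x)\cdot \nabla h_l(x) = 0$ with respect to $\lambda$, as recorded just after Proposition~\ref{prop-rank-lemma}, gives $J_\lambda^T \nabla h_l = 0$, whence $\im(J_\lambda) \subseteq \mathrm{span}(\nabla h_l)^\perp = \im(J_x)$, exactly the required inclusion.

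No serious obstacle is expected: the proof is essentially bookkeeping once Proposition~\ref{prop-Emanifold} is granted. The one point requiring a small dose of care is the dimension count for $\ker J_x^T$, which is what turns an a priori inclusion of spans into an equality of orthogonal complements and thus allows the comparison of $\im(J_\lambda)$ with $\im(J_x)$; without invoking the non-degeneracy condition at this step one would only obtain inclusions in the wrong direction. Since $f$ is analytic, the submersion $\pi|_E$ is analytic and $E_\lambda$ inherits the claimed real analytic $k$-dimensional manifold structure.
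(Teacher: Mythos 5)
Your proof is correct, but it takes a genuinely different route from the paper's. The paper proves Lemma~\ref{prop-fiber1} in one line by freezing $\lambda$ and working directly with $f_\lambda = f(\lambda,\cdot): V \to \R^n$: by non-degeneracy condition ii) the Jacobian $\partial f/\partial x$ has constant rank $n-k$ along $E_\lambda = f_\lambda^{-1}(0)$, so the constant-rank level-set theorem gives a smooth real analytic variety of dimension $n-(n-k)=k$. You instead go through the total space: you take the manifold structure of $E$ from Proposition~\ref{prop-Emanifold}, show that $\pi|_E$ is a submersion via the inclusion $\im(\partial f/\partial\lambda) \subseteq \im(\partial f/\partial x)$ on $E$ (which you correctly extract from $\ker\bigl(\partial f/\partial x\bigr)^T = \mathrm{span}(\nabla h_1,\dots,\nabla h_k)$ together with $\bigl(\partial f/\partial\lambda\bigr)^T\nabla h_l = 0$), and then apply the regular value theorem. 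Both arguments are sound and rest on the same non-degeneracy hypotheses. The paper's version is more economical and self-contained at this stage of the development; yours proves more, since it establishes en passant that $\pi$ is a submersion --- a fact the paper only records later as Proposition~\ref{prop-submersion}, and proves there by a different mechanism (a dimension count on $\ker\pi_*$ rather than your image inclusion). Your image inclusion also prefigures the solvability of the lifting system in Section~\ref{sec::parallel_transport}, so the extra work is not wasted; just be aware that invoking $T_{(\lambda,x)}E = \ker J$ makes your proof depend on Proposition~\ref{prop-Emanifold} in a way the paper's does not.
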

\begin{proof}
Given $\lambda \in \Lambda_0$ let $f_\lambda=f(\lambda, \cdot) : V\rightarrow \R^n$. By the non-degeneracy condition the rank of $f_\lambda$ is constant equal to $n-k$ over $E_\lambda=f_\lambda^{-1}(0)$ which is therefore as in the statement of the lemma. \end{proof}

\begin{Proposition}
The closed set $\Lambda_0$ has non empty interior in $\Lambda$. 
\end{Proposition}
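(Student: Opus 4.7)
The plan is to prove something slightly stronger: that $\pi: E \to \Lambda$ is a submersion, whence its image $\Lambda_0$ is open in $\Lambda$; together with the standing hypothesis $E \ne \emptyset$, this immediately yields the proposition.

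By Proposition~\ref{prop-Emanifold}, $E$ is a smooth manifold of dimension $m+k$, and by Lemma~\ref{prop-fiber1}, each fiber $E_\lambda$ has dimension $k$. So at any $(\lambda, x) \in E$, the kernel of $d\pi_{(\lambda,x)} : T_{(\lambda,x)}E \to T_\lambda \Lambda$ is the tangent space to the fiber, of dimension $k$; thus $d\pi$ has rank at most $m$, and I must show the rank equals $m$, i.e.\ that $d\pi$ is surjective at every point of $E$.

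A tangent vector to $E$ at $(\lambda, x)$ is a pair $(u, v) \in \R^m \times \R^n$ satisfying $\frac{\partial f}{\partial \lambda} u + \frac{\partial f}{\partial x} v = 0$, and $d\pi(u, v) = u$. Thus surjectivity of $d\pi$ at $(\lambda, x)$ is equivalent to the inclusion $\im(\partial f/\partial \lambda) \subseteq \im(\partial f/\partial x)$ there. To verify this, I would observe that both subspaces lie in the orthogonal complement $W := (\mathrm{span}\{\nabla h_1, \ldots, \nabla h_k\})^\perp$: for $\partial f/\partial \lambda$ this is the identity $(\partial f/\partial \lambda)^T \nabla h_l = 0$ already recorded just before the non-degeneracy conditions (and obtained by differentiating $\langle f, \nabla h_l\rangle = 0$ in $\lambda$, using the $\lambda$-independence of $h_l$); for $\partial f/\partial x$ it follows from the identity $(\partial f/\partial x)^T \nabla h_l + D^2 h_l \cdot f = 0$ evaluated on $E$, where the $D^2 h_l \cdot f$ term vanishes. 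The subspace $W$ has dimension $n-k$, and by non-degeneracy condition~ii), $\im(\partial f/\partial x)$ itself has dimension $n-k$; hence $\im(\partial f/\partial x) = W$ and in particular it contains $\im(\partial f/\partial \lambda)$, as required.

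This shows $\pi$ is a submersion, so $\Lambda_0 = \pi(E)$ is open in $\Lambda$; combined with $E \ne \emptyset$, this gives that $\Lambda_0$ is a non-empty open set, and a fortiori has non-empty interior. There is no real obstacle here: the whole argument is just a rank count, and the only slightly subtle point — the double containment in $W$ — is essentially already present in the material preceding the proposition.
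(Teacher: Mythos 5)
Your proof is correct, but it takes a genuinely different route from the paper's. The paper argues by contradiction: if $\Lambda_0$ had empty interior, then at a point where $\pi$ attains its maximal rank (so that the rank is locally constant) that rank would have to be strictly less than $m$, since otherwise $\pi$ would be a local submersion there and its image would contain an open set; the constant-rank theorem then forces the fiber through that point to have dimension at least $k+1$, contradicting Lemma~\ref{prop-fiber1}. You instead prove directly that $\pi$ is a submersion at \emph{every} point of $E$, by establishing $\im\left(\frac{\partial f}{\partial \lambda}\right)\subseteq\im\left(\frac{\partial f}{\partial x}\right)$ on $E$ through the common containment in $\left(\mathrm{span}\{\nabla h_1,\dots,\nabla h_k\}\right)^{\perp}$ together with the rank count from non-degeneracy condition ii). This is a strictly stronger conclusion, and it anticipates Proposition~\ref{prop-submersion}, which the paper only proves later and by a different, purely dimensional argument ($\ker\pi_{*,u}$ is $k$-dimensional inside the $(m+k)$-dimensional $T_uE$). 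What the paper's softer argument buys is economy: it needs only the existence of a maximal-rank point and the fiber-dimension lemma, with no further linear algebra. What yours buys is a sharper statement obtained earlier, plus a self-contained explanation of \emph{why} $d\pi$ is onto, namely that $\im\left(\frac{\partial f}{\partial x}\right)$ already fills the entire orthogonal complement of the gradients of the first integrals. The one point worth making explicit in your write-up is that identifying $T_{(\lambda,x)}E$ with the full kernel of $\left[\frac{\partial f}{\partial \lambda},\frac{\partial f}{\partial x}\right]$, and not merely a subspace of it, relies on the constant-rank observation from the proof of Proposition~\ref{prop-Emanifold}; without that equality, the containment of images would not by itself yield surjectivity of $d\pi$.
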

\begin{proof}
Assume $\Lambda_0$ had empty interior and consider a point $(\lambda,x)$ where $\pi$ has a maximal rank. Then this rank is constant over a neighborhood $U$ of this point. If the rank is $m$, then $\pi$ restricted to $U$ is a submersion, so that $\pi(U)$ is open in $\Lambda$, which is impossible if $\Lambda_0$ has empty interior. 
Therefore the rank of $\pi$ is always strictly smaller than $m$ and consequently the dimension of the fibers has to be at least $k+1$, contradicting lemma~\ref{prop-fiber1} above.  
\end{proof}

From now on, we will focus on the case where $\lambda$ lies in the interior of $\Lambda_0$. Restricting attention to a connected component of $\Lambda$ if need be, we may and will assume in the sequel that $\Lambda_0$ is open and connected. Finally and for the sake of notational simplicity, we rename $\Lambda_0$ just $\Lambda$. In other words we henceforth assume that $\Lambda$ is open and connected in $\R^m$ and that the projection $\pi:E \rightarrow \Lambda$ is surjective.


\section{Fibers over the parameter space}
\label{sec::fiber_bundle}

Let us refine lemma~\ref{prop-fiber1} slightly

\begin{Proposition}
\label{prop-compact-smooth-fiber}
For every $\lambda \in \Lambda$ the fiber $E_\lambda=\pi^{-1}(\lambda)$ is a compact smooth real analytic variety of dimension $k$ whose boundary (if any) is contained in $\partial V$. 
\end{Proposition}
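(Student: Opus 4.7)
The plan is to sharpen Lemma~\ref{prop-fiber1} by adding compactness and by localizing the possible boundary of $E_\lambda$ inside $\partial V$; the smooth real analytic $k$-dimensional manifold structure was already produced there from the rank condition on $\partial f_\lambda/\partial x$, so only the topological compactness and the boundary statement remain to be proved.

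Compactness is essentially immediate: $E_\lambda$ is the zero locus in $\{\lambda\}\times V$ of the continuous map $f(\lambda,\cdot)$, hence closed in the compact set $\{\lambda\}\times V$, and therefore compact. Equivalently, since $\pi$ is proper (as emphasized at the end of Section~\ref{sec-model}), $\pi^{-1}(\lambda)$ is compact. For the boundary statement, the plan is to show that any point $(\lambda,x)\in E_\lambda$ with $x$ in the interior of $V$ admits a boundaryless neighborhood in $E_\lambda$. Indeed, by the nondegeneracy condition, $\partial f_\lambda/\partial x$ has rank $n-k$ at such a point; extracting an invertible $(n-k)\times (n-k)$ submatrix and invoking the analytic implicit function theorem, we may locally parametrize $E_\lambda$ near $(\lambda,x)$ as the graph of an analytic map from an open subset of $\R^k$ to $\R^{n-k}$. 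In particular $E_\lambda$ is a boundaryless smooth $k$-manifold in a neighborhood of $(\lambda,x)$, so, contrapositively, any boundary point of $E_\lambda$ must lie above $\partial V$.

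The main subtlety I foresee is upgrading $E_\lambda$ to an honest manifold-with-boundary along $\{\lambda\}\times \partial V$: since $\partial V$ is only stratified, this would strictly speaking require transversality of the analytic zero set $f_\lambda^{-1}(0)$ with each stratum of $\partial V$. Absent such an explicit hypothesis, I read the proposition as asserting that the topological boundary of $E_\lambda$ inside $\{\lambda\}\times V$ is contained in $\{\lambda\}\times \partial V$, which is exactly what the local manifold structure in the interior of $V$ yields, supplemented by the tangency or inward-pointing condition on $f$ at $\partial V$ imposed in Section~\ref{sec-model} (which prevents the fiber from spilling out of $V$ transversally through $\partial V$).
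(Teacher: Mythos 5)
Your proof is correct and follows essentially the same route as the paper: compactness from $E_\lambda$ being closed in the compact set $V$, and the boundary statement via the implicit function theorem showing that $E_\lambda$ is locally a graph (hence boundaryless) near any point over the interior of $V$. Your closing remark about the stratified structure of $\partial V$ is a reasonable extra caveat, but the paper's proof does not address it either.
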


\begin{proof} With the notations used in the proof of lemma~\ref{prop-fiber1}, we know that $E_\lambda=f_\lambda^{-1}(0)$ is indeed a smooth real analytic variety of dimension $k$. Being a closed set in $V$, it is compact.  

Its boundary is nonempty if and only if $f_\lambda$ vanishes on the boundary $\partial V$ of $V$. Indeed if $x_0$ is a point in $E_\lambda$ {\it not} in $\partial V$, then by the implicit function theorem, $E_\lambda$ is locally the graph of a function of $k$ variables, so $x_0$ cannot lie on the boundary of $E_\lambda$. 
\end{proof}

\begin{Corollary}
If $k=1$, the fibers over $\Lambda$ have connected components which are diffeomorphic either to circles or to line segments. In the latter case, the ends of the segment lie on $\partial V$.   
\end{Corollary}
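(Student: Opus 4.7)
The plan is to reduce the statement to the classification of compact connected smooth $1$-manifolds (possibly with boundary). By Proposition~\ref{prop-compact-smooth-fiber}, for each $\lambda\in\Lambda$ the fiber $E_\lambda$ is a compact smooth real analytic variety of dimension $k=1$, whose (possibly empty) boundary is contained in $\partial V$. A connected component $C$ of $E_\lambda$ is therefore a connected compact smooth $1$-manifold, with boundary $\partial C \subset \partial V$.

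Next I would invoke the standard classification theorem: every connected compact smooth $1$-manifold is diffeomorphic either to the circle $S^1$ (if it is boundaryless) or to the closed interval $[0,1]$ (if it has nonempty boundary, in which case the boundary consists of exactly two points). This gives the dichotomy claimed in the statement: either $C \simeq S^1$, or $C$ is diffeomorphic to a closed line segment.

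Finally, in the segment case the two boundary points of $C$ must lie in $\partial V$, which follows directly from $\partial C \subset \partial V$ as supplied by Proposition~\ref{prop-compact-smooth-fiber}. No further calculation is required.

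There is essentially no obstacle here: the only nontrivial ingredient is the classification of compact $1$-manifolds, which is a classical result. One small thing worth checking carefully is that each connected component of $E_\lambda$ is closed in $V$, hence compact, but this is immediate since $E_\lambda$ itself is compact and has at most finitely many connected components (a compact smooth manifold being a finite disjoint union of its components).
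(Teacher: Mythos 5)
Your proof is correct and follows exactly the route the paper intends: the corollary is stated without proof precisely because it is an immediate consequence of Proposition~\ref{prop-compact-smooth-fiber} combined with the classification of compact connected smooth $1$-manifolds. Your additional remark about components being compact is a reasonable bit of care, and nothing is missing.
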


\begin{Proposition}
\label{prop-submersion}
The rank of the projection $\pi$ is constant equal to $m$. It is a surjective submersion.  
\end{Proposition}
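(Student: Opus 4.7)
The statement has two parts: constant rank $m$ and surjective submersion. Since $\pi$ is already assumed surjective (after renaming $\Lambda_0$ to $\Lambda$), everything reduces to showing that at every point $(\lambda,x)\in E$ the differential $d\pi_{(\lambda,x)} : T_{(\lambda,x)}E \to T_\lambda\Lambda \simeq \R^m$ is surjective. Since $\dim E = m+k$ and $\dim \Lambda = m$, surjectivity of $d\pi$ automatically means full rank $m$, hence submersion.

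At a point $(\lambda,x)\in E$ the tangent space is $T_{(\lambda,x)}E = \ker J$, where $J = \left(\frac{\partial f}{\partial \lambda}, \frac{\partial f}{\partial x}\right)$, and $d\pi$ is simply the projection $(\dot\lambda,\dot x)\mapsto \dot\lambda$. So, given any $\dot\lambda\in\R^m$, I need to produce $\dot x\in\R^n$ with
\begin{equation*}
\frac{\partial f}{\partial x}(\lambda,x)\,\dot x \;=\; -\,\frac{\partial f}{\partial \lambda}(\lambda,x)\,\dot\lambda.
\end{equation*}
In other words, the task is to show the inclusion $\im\!\left(\frac{\partial f}{\partial \lambda}\right) \subseteq \im\!\left(\frac{\partial f}{\partial x}\right)$ at every point of $E$.

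The key algebraic input is Proposition~2.1 together with the remark made right after it: at any equilibrium, both $\left(\frac{\partial f}{\partial x}\right)^{T}\nabla h_l = 0$ and $\left(\frac{\partial f}{\partial \lambda}\right)^{T}\nabla h_l = 0$ hold for $l=1,\dots,k$. Equivalently, both $\im\!\left(\frac{\partial f}{\partial x}\right)$ and $\im\!\left(\frac{\partial f}{\partial \lambda}\right)$ are contained in $\mathrm{span}(\nabla h_1,\dots,\nabla h_k)^{\perp}$, which is a subspace of $\R^n$ of dimension $n-k$ thanks to the independence hypothesis on the $h_l$. But by non-degeneracy condition (ii), $\rank\!\left(\frac{\partial f}{\partial x}\right) = n-k$ on $E$, so $\im\!\left(\frac{\partial f}{\partial x}\right)$ equals that orthogonal complement; consequently the image of $\frac{\partial f}{\partial \lambda}$ fits inside it. This is exactly the inclusion needed to solve the linear system above for $\dot x$.

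Hence $d\pi_{(\lambda,x)}$ is surjective, so its rank is identically $m$ and $\pi$ is a submersion. Combined with the already-assumed surjectivity of $\pi$ onto $\Lambda$, this proves the proposition. The only nontrivial point is the algebraic one in the previous paragraph, and the mild obstacle there is noticing that the non-degeneracy assumption (ii) is exactly what is needed to convert the orthogonality relation of Proposition~2.1 into an equality of images; without the rank condition one would only get an inclusion, which would be insufficient.
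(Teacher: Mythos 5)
Your proof is correct, but it takes a genuinely different route from the paper's. The paper computes the vertical kernel: a vector $(a,b)\in T_uE=\ker J$ lies in $\ker(\pi_{*,u})$ iff $a=0$ and $\frac{\partial f}{\partial x}b=0$, so by non-degeneracy condition ii) that kernel has dimension $k$, and rank--nullity on the $(m+k)$-dimensional space $T_uE$ gives $\rank(\pi_{*,u})=m$ in one line. You instead prove surjectivity of $d\pi$ head-on, by establishing the image inclusion $\im\left(\frac{\partial f}{\partial \lambda}\right)\subseteq\im\left(\frac{\partial f}{\partial x}\right)$ on $E$: both images sit inside $\mathrm{span}(\nabla h_1,\dots,\nabla h_k)^{\perp}$ by the orthogonality relations from Proposition~\ref{prop-rank-lemma} and the remark following it, and condition ii) forces $\im\left(\frac{\partial f}{\partial x}\right)$ to fill out that $(n-k)$-dimensional space, so the linear system $\frac{\partial f}{\partial x}\dot x=-\frac{\partial f}{\partial \lambda}\dot\lambda$ is solvable for every $\dot\lambda$. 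Both arguments are sound and draw on the same hypotheses (constant rank of $J$ along $E$, condition ii)); the paper's is shorter, while yours is constructive and buys something extra: the solvability statement you prove is precisely what makes the curve-lifting system~(\ref{eq-curve-lifting-system}) consistent, and your identification of $\im\left(\frac{\partial f}{\partial x}\right)$ with the orthogonal complement of the gradients is equation~(\ref{eq-image-tangent-space}), so your argument makes explicit the link between the submersion property and the existence of horizontal lifts that the paper only exploits later, in Section~\ref{sec::parallel_transport}.
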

\begin{proof}
The tangent space of $E$ at $u = (\lambda,x)$ is $\ker (J)=\ker \left ( \left [ \frac{\partial f}{\partial \lambda}, \frac{\partial f}{\partial x}   \right] \right)$. Moreover $(a,b) \in T_u(E) \cap \ker(\pi_*)$ if and only if $\frac{\partial f}{\partial \lambda} a + \frac{\partial f}{\partial x} b = 0$ and $\pi_*(a,b) = a = 0$. Hence, $\ker(\pi_{*,u}) = \{(0,b) | \frac{\partial f}{\partial x} b = 0\}$. Therefore $\dim(\ker(\pi_{*,u})) = k$ and $\rank(\pi_{*,u}) = m+k-k = m$. 
\end{proof}

Note that this result is consistent with lemma~\ref{prop-fiber1} and proposition~\ref{prop-compact-smooth-fiber}.

\begin{Proposition}
The triplet $(E,\pi,\Lambda)$ defines a fiber bundle.
\end{Proposition}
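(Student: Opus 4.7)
The plan is to invoke Ehresmann's fibration theorem, which asserts that a proper surjective submersion between smooth manifolds is automatically a locally trivial fibration. All three hypotheses are already at our disposal: Proposition~\ref{prop-Emanifold} gives that $E$ is a smooth manifold, Proposition~\ref{prop-submersion} gives that $\pi$ is a surjective submersion onto $\Lambda$, and the compactness of $V$ together with the fact that $E$ is closed in $\Lambda\times V$ (being the zero set of the continuous map $f$) gives properness: for any compact $K\subset \Lambda$, $\pi^{-1}(K)$ is a closed subset of the compact set $K\times V$.

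To make the argument concrete rather than cite Ehresmann, here is the local trivialization I would construct. Fix $\lambda_0\in\Lambda$ and choose an open ball $U\subset\Lambda$ around $\lambda_0$ with linear coordinates so that the coordinate vector fields $\partial/\partial\lambda_i$ ($i=1,\ldots,m$) trivialize $TU$. Because $\pi$ is a submersion, at every point $u\in\pi^{-1}(\lambda_0)$ one can choose a subspace of $T_uE$ complementary to $\ker(\pi_{*,u})$; using the non-degeneracy condition iii) one can in fact select these complements smoothly (this is the horizontal distribution that will be studied in detail in the next section). Lift each $\partial/\partial\lambda_i$ through this distribution to obtain smooth vector fields $X_1,\ldots,X_m$ on a neighborhood of $E_{\lambda_0}$ in $E$, with $\pi_* X_i=\partial/\partial\lambda_i$.

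Since $E_{\lambda_0}$ is compact (Proposition~\ref{prop-compact-smooth-fiber}) the flows $\varphi^{X_i}_{t}$ are defined on all of $E_{\lambda_0}$ for $|t|$ small, and the composition
$$
\Phi(\lambda,x)\;=\;\varphi^{X_m}_{\lambda_m-\lambda_{0,m}}\circ\cdots\circ\varphi^{X_1}_{\lambda_1-\lambda_{0,1}}(x)
$$
defines a smooth map from $U'\times E_{\lambda_0}\to E$ for a small enough $U'\subset U$. By construction $\pi\circ\Phi(\lambda,x)=\lambda$, and the differential of $\Phi$ at $(\lambda_0,x)$ is an isomorphism onto $T_xE$ (horizontal $+$ vertical decomposition), so after possibly shrinking $U'$, $\Phi$ is a diffeomorphism $U'\times E_{\lambda_0}\xrightarrow{\sim}\pi^{-1}(U')$ intertwining $\pi$ with projection onto the first factor. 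This is the required local trivialization.

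The main obstacle is making sure the flows do not leave $E$ and are defined long enough to sweep out all of $\pi^{-1}(U')$; both issues are handled by (a) noting that the horizontal lifts are tangent to $E$ by construction, so their flows stay in $E$, and (b) using compactness of $E_{\lambda_0}$ to obtain a uniform lower bound on the flow existence time. A minor technical point worth flagging is that flows in the different coordinate directions need not commute, which is why $\Phi$ is defined as an iterated composition rather than via a single multiparameter flow; the argument above does not require commutativity, only that the restriction of $\pi$ intertwines the $X_i$ with the commuting coordinate fields on $U$, which is automatic.
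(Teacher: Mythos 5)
Your proposal is correct and follows exactly the paper's route: the paper's entire proof is your first paragraph, namely invoking Ehresmann's theorem for the proper surjective submersion $\pi$, with surjectivity by construction, properness from compactness of $V$, and the submersion property from Proposition~\ref{prop-submersion}. The explicit flow-box trivialization you add is just the standard proof of Ehresmann's theorem unwound in this setting (and is sound), so it does not constitute a genuinely different argument.
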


\begin{proof}
By Erhesmann's theorem it is enough to show that $\pi$ is a proper surjective submersion to conclude that it is a fiber bundle. Now it is surjective by definition, and we already know it is proper (because $V$ is compact). Finally it is a submersion by proposition~\ref{prop-submersion}. 
\end{proof}

Since $\Lambda$ is assumed to be connected, following section~\ref{sec-nature-Lambda}, we get the following corollary.

\begin{Corollary}
The fibers of $E_\lambda$ of $E$ are all diffeomorphic. 
\end{Corollary}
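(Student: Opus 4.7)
The plan is to invoke the local triviality of the fiber bundle established in the previous Proposition, and then use the connectedness of $\Lambda$ to conclude globally.

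First, I would recall the definition of a fiber bundle: for every $\lambda_0 \in \Lambda$ there exists an open neighborhood $U \subset \Lambda$ of $\lambda_0$ together with a diffeomorphism $\phi : \pi^{-1}(U) \to U \times E_{\lambda_0}$ such that $\mathrm{pr}_1 \circ \phi = \pi$, where $\mathrm{pr}_1$ denotes projection onto the first factor. Restricting $\phi$ to $\pi^{-1}(\lambda) = E_\lambda$ for any $\lambda \in U$ then yields a diffeomorphism $E_\lambda \cong \{\lambda\} \times E_{\lambda_0} \cong E_{\lambda_0}$.

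Next, I would introduce the equivalence relation on $\Lambda$ defined by $\lambda \sim \mu$ if and only if $E_\lambda$ and $E_\mu$ are diffeomorphic. The preceding observation shows that every equivalence class is open: if $\lambda_0 \in \Lambda$, then the trivializing neighborhood $U$ above is entirely contained in the class of $\lambda_0$. Consequently $\Lambda$ decomposes as a disjoint union of open equivalence classes.

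Finally, since $\Lambda$ has been assumed connected in Section~\ref{sec-nature-Lambda}, the only way to write it as a disjoint union of nonempty open sets is trivially, so there is a single equivalence class and all fibers $E_\lambda$ are pairwise diffeomorphic. There is no real obstacle here; the entire content is contained in the fiber bundle structure, and the corollary is essentially a statement about how local triviality plus connectedness of the base forces a single fiber type.
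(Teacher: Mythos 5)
Your proof is correct and takes essentially the same route as the paper, which simply observes that the corollary follows from the fiber bundle structure together with the connectedness of $\Lambda$; you have just written out the standard local-triviality-plus-connectedness argument in full detail. No issues.
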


\noindent \textit{In the sequel the bundle $(E,\pi,\Lambda)$ will be called the equilibrium bundle}.

\section{Parallel transport of the fibers}
\label{sec::parallel_transport}

\subsection{Connection}

In this section, we aim at defining a connection on the equilibrium bundle and showing that parallel transport exists. In a very general setting, a connection on a bundle $\pi : E\rightarrow B$ is defined by an horizontal distribution, that is a subbundle of the tangent bundle $TE \rightarrow E$, say $\mathcal{H}$, such that for every $u \in E$, we have:
$$
T_u E = V_u \oplus H_u,
$$ 
where $V_u$ is the vertical summand of the tangent space, that is the set of tangent vectors $X$ at $u$, such that $\pi_*(X) = 0$. In other words $V_u\subset T_u E$ is spanned by the vectors which are tangent to the fiber over $\pi(u)$. We will denote by $\mathcal{V}$ the vertical subbundle of $TE$. 

\subsection{Transversality between fibers and integral manifolds}
\label{sec-transversality}

Given $u = (\lambda_0,x_0) \in E$, a basis of $\ker \left ( \left [\frac{\partial f}{\partial x}(u) \right]^T \right)$ is given by $( \nabla h_l(x_0))_{1 \leq l \leq k}$. Therefore the family $( \nabla h_l(x_0))_{1 \leq l \leq k}$ is also a basis of the subspace orthogonal to $\im \left ( \frac{\partial f}{\partial x}(u) \right)$, so that:
\begin{equation}
\label{eq-image-tangent-space}
\im \left ( \frac{\partial f}{\partial x}(u) \right) = \{v \in \R^{n} \mid  v^T \nabla h_l(x_0) = 0, l=1,\ldots,k\}
\end{equation}

Let $a=h(x_0) \in \R^k$ where $h=(h_1, \cdots, h_k)$ and let  $N_{a}$ be the manifold defined by $N_{a} = V \cap \left ( \cap_{l=1}^k Z(h_l-a_l)\right)$ i.e. $h_l=a_l$ on $N_a$, with $a=(a_1, \cdots, a_k)$. Equation~(\ref{eq-image-tangent-space}) simply means that:
\begin{equation}
\label{eq-image-tangent-space-explicit}
\im \left ( \frac{\partial f}{\partial x} (u) \right) = T_{x_0} N_{a}
\end{equation}

From this equation, one can state:
\begin{Proposition}
\label{prop-transversality}
Assuming the vector field $f(\lambda,x)$ satisfies the genericity condition iii) in \S 2, the manifold $N_{h(x)}$ and the fiber $E_\lambda$ are transverse at $x$ in $V$ for every $(\lambda,x) \in \Lambda\times E_\lambda$. 
\end{Proposition}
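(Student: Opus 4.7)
The plan is to compute both tangent subspaces explicitly as subspaces of $T_xV \simeq \R^n$ and invoke directly the non-degeneracy condition iii).

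First I would identify the tangent space to $N_{h(x)}$ at $x$. This is already recorded in equation~(\ref{eq-image-tangent-space-explicit}): with $u=(\lambda,x)$ and $a=h(x)$, one has $T_x N_{a} = \im\!\left(\tfrac{\partial f}{\partial x}(u)\right)$, a subspace of $\R^n$ of dimension $n-k$.

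Next I would identify the tangent space to the fiber $E_\lambda$ at $x$. From the proof of Proposition~\ref{prop-submersion}, the vertical subspace of $T_uE$ is $\ker(\pi_{*,u}) = \{(0,b) : \tfrac{\partial f}{\partial x}(u)\,b = 0\}$. Under the natural identification of this vertical summand with $T_xE_\lambda \subset T_xV \simeq \R^n$, one obtains
\[
T_x E_\lambda = \ker\!\left(\tfrac{\partial f}{\partial x}(u)\right),
\]
a subspace of $\R^n$ of dimension $k$ (consistent with Proposition~\ref{prop-compact-smooth-fiber}).

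Finally, transversality of $N_{h(x)}$ and $E_\lambda$ at $x$ in $V$ amounts to $T_xN_{h(x)} + T_xE_\lambda = T_xV = \R^n$. But this is precisely the content of non-degeneracy condition iii), which asserts
\[
\ker\!\left(\tfrac{\partial f}{\partial x}(u)\right) \oplus \im\!\left(\tfrac{\partial f}{\partial x}(u)\right) = \R^n
\]
at every $(\lambda,x) \in E$. The dimensions $k$ and $n-k$ add up to $n$, and the direct sum decomposition gives not just transversality but complementarity of the two tangent spaces, completing the proof. There is no real obstacle here: the heart of the argument is simply recognizing that conditions iii) has been tailor-made for exactly this transversality statement, and the two tangent spaces have already been identified in earlier results.
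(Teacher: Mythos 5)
Your proof is correct and follows exactly the route the paper intends: the paper states the proposition as an immediate consequence of equation~(\ref{eq-image-tangent-space-explicit}) (which identifies $T_xN_{h(x)}$ with $\im\bigl(\tfrac{\partial f}{\partial x}(u)\bigr)$) together with condition iii), and does not even write out a separate proof. Your only addition is to make explicit the identification $T_xE_\lambda = \ker\bigl(\tfrac{\partial f}{\partial x}(u)\bigr)$ from the constant-rank description of the fiber, which is a worthwhile clarification but not a different argument.
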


\subsection{Lifting curves}

We now show how a curve in $\Lambda$ can be naturally lifted to $E$. 
So let $\lambda(t): [0,1] \rightarrow \Lambda$ define a curve from $\lambda(0) = \lambda_1$ to $\lambda(1) = \lambda_2$ in 
$\Lambda$ (both ends are included).

Given $x \in E_{\lambda_1}$ we want to build a curve $\gamma: [0,1] \rightarrow E \cap N_{h(x)}$ starting from $x=\gamma(0)$. So  $\gamma(t)$ must satisfy:
$$
\forall t \in [0,1], \left \{ 
\begin{array}{l}
\pi(\gamma(t)) = \lambda(t) \\
\forall i, h_i(\gamma(t)) = h_i(x)
\end{array} \right .
$$

This yields the following differential system:
$$
\left \{ 
\begin{array}{l}
\frac{\partial f}{\partial \lambda} (\lambda(t),\gamma(t)) \dot{\lambda}(t) + \frac{\partial f}{\partial x} (\lambda(t),\gamma(t)) \dot{\gamma}(t) = 0 \\
\forall i, \nabla h_i(\gamma(t))^T \dot{\gamma}(t) = 0
\end{array}
\right.
$$

which can be written more compactly as:
\begin{equation}
\label{eq-curve-lifting-system}
A(t) \dot{\gamma}(t) = b(t),
\end{equation}
with 
$$
\begin{array}{cc}
A(t) = \left [  \begin{array}{c}
\frac{\partial f}{\partial x} (\lambda(t),\gamma(t)) \\
\frac{\partial h}{\partial x}(\gamma(t))
\end{array}  \right], 
& b(t) = \left [ 
\begin{array}{c}
- \frac{\partial f}{\partial \lambda}(\lambda(t), \gamma(t)) \dot{\lambda}(t) \\
0
\end{array}
\right],
\end{array}
$$
where $h = (h_1, \cdots, h_k)$. 

\begin{Proposition}
\label{prop-curve-lifting}
The differential system defined by equation~(\ref{eq-curve-lifting-system}), has a unique solution defined on the closed interval $[0,1]$. 
\end{Proposition}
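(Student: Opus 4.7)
The plan is to recast the overdetermined linear system~(\ref{eq-curve-lifting-system}) as a well-posed first order ODE $\dot\gamma(t)=F(t,\gamma(t))$, produce a local solution via Cauchy-Lipschitz, and then extend it to all of $[0,1]$ using compactness of $E$ above $\lambda([0,1])$ together with the conservation of $h(\gamma(t))$.

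The first key point is pointwise solvability: at each $(t,\gamma)$ with $(\lambda(t),\gamma)\in E$, I claim $A(t)$ has full column rank $n$ and $b(t)\in \im A(t)$, so that $\dot\gamma(t)$ is uniquely determined. For the rank, if $v\in \ker A(t)$ then $v\in \ker(\partial f/\partial x)$ and $v$ is orthogonal to every $\nabla h_l$; by~(\ref{eq-image-tangent-space}) the latter is equivalent to $v\in \im(\partial f/\partial x)$, and non-degeneracy condition iii) then forces $v=0$. For compatibility I would use the identity $(\partial f/\partial \lambda)^T\nabla h_l=0$ recorded just after Proposition~\ref{prop-rank-lemma}: it shows that $-(\partial f/\partial \lambda)\dot\lambda$ is orthogonal to every $\nabla h_l$ and hence, again by~(\ref{eq-image-tangent-space}), lies in $\im(\partial f/\partial x)$. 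Condition iii) further implies that the restriction $\partial f/\partial x \colon \im(\partial f/\partial x)\to \im(\partial f/\partial x)$ is a linear isomorphism, so there is a unique $v^\star\in \im(\partial f/\partial x)$ with $(\partial f/\partial x)\,v^\star=-(\partial f/\partial \lambda)\dot\lambda$; by its membership in $\im(\partial f/\partial x)$, this $v^\star$ is automatically orthogonal to each $\nabla h_l$ and hence satisfies the second block of~(\ref{eq-curve-lifting-system}).

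Packaging $v^\star=F(t,\gamma)$ produces a smooth right-hand side: the linear system defining $v^\star$ has coefficients depending analytically on $\gamma$ and continuously on $t$, and its solvability is uniform on the relevant closed set by the above. The Cauchy-Lipschitz theorem then yields a unique local solution $\gamma$ with $\gamma(0)=x$, defined on some maximal subinterval $I\subseteq[0,1]$ containing $0$ and relatively open in $[0,1]$.

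The remaining step is to show $I=[0,1]$. By construction $h(\gamma(t))=h(x)$ for all $t\in I$ and $\gamma(t)\in E_{\lambda(t)}$, so the curve remains inside $K:=E\cap \pi^{-1}(\lambda([0,1]))\cap (\Lambda\times N_{h(x)})$. Since $\pi$ is proper and $\lambda([0,1])$ is compact, $\pi^{-1}(\lambda([0,1]))$ is compact, hence so is $K$; on a neighborhood of $K$ the vector field $F$ is continuous and bounded, so $\dot\gamma$ stays bounded on $I$ and the solution cannot escape any compact set in finite time. Standard continuation then shows $I$ is closed in $[0,1]$ and therefore equal to $[0,1]$. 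The most delicate step is the verification of compatibility at each $t$: it combines all three non-degeneracy conditions with the identity $(\partial f/\partial \lambda)^T\nabla h_l=0$, and is the algebraic counterpart of the geometric transversality recorded in Proposition~\ref{prop-transversality}.
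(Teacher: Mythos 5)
Your proof is correct and follows the same overall strategy as the paper---turn the overdetermined linear system~(\ref{eq-curve-lifting-system}) into a genuine ODE, invoke Cauchy--Lipschitz, and extend to $[0,1]$---but you are more careful at two points where the paper is terse, and one of them matters. The paper deduces from Proposition~\ref{prop-transversality} that $A(t)$ has full column rank and then simply writes $\dot{\gamma}=(A^TA)^{-1}A^Tb$; that formula is only the least-squares minimizer, and it solves $A\dot{\gamma}=b$ exactly only if $b\in\im A$. You verify this compatibility explicitly, using the identity $\left(\frac{\partial f}{\partial \lambda}\right)^T\nabla h_l=0$ recorded after Proposition~\ref{prop-rank-lemma} together with~(\ref{eq-image-tangent-space}) and condition iii), and you exhibit the solution $v^\star$ as the preimage of $-\frac{\partial f}{\partial\lambda}\dot{\lambda}$ under the isomorphism $\frac{\partial f}{\partial x}\colon \im\left(\frac{\partial f}{\partial x}\right)\to\im\left(\frac{\partial f}{\partial x}\right)$. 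This fills a real (if small) gap. Likewise, your continuation argument via the compact set $K=E\cap\pi^{-1}(\lambda([0,1]))\cap(\Lambda\times N_{h(x)})$ justifies the paper's one-line claim that local existence yields a solution ``over a compact interval.'' One residual point, shared with the paper: the right-hand side $F$ is a priori defined only along $E$, so to apply Cauchy--Lipschitz on an open domain you should note that the formula $(A^TA)^{-1}A^Tb$ extends smoothly to a neighborhood where $A$ retains full rank (and, if the fiber meets $\partial V$, that the lift does not exit $V$); this is implicit in both treatments.
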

\begin{proof}
By proposition~\ref{prop-transversality}, the matrix $A(t)$ has full rank $n$ for every $t \in [0,1]$. Therefore equation~(\ref{eq-curve-lifting-system}) can be written:
$$
\dot{\gamma}(t) = \left (A(t)^T A(t) \right)^{-1} A(t)^Tb(t) 
$$  
where the right-hand side actually depends on $\gamma(t)$. The functions entering on the right-hand side being smooth, {\it a fortiori} Lipschitz, elementary results about differential equations (the Cauchy-Kovaleska theorem) garantee the local existence  (in time) and uniqueness of a solution with given initial condition, thus also over a compact interval.

\end{proof}

\begin{Theorem}
\label{thm-fiber-cap-N}
For any $\lambda \in \Lambda$ and any $x \in E_\lambda$, the intersection $E_\lambda \cap N_{h(x)}$ consists in a finite set of points. Moreover the number of points in $E_\lambda \cap N_{h(x)}$ is independent of $\lambda$.
\end{Theorem}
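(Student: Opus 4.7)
The plan is to handle the two assertions separately: first finiteness of the intersection by a dimension/compactness argument, then constancy of the cardinality via the parallel transport just constructed.

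For finiteness, I would invoke Proposition~\ref{prop-transversality}: the fiber $E_\lambda$ has dimension $k$ by Proposition~\ref{prop-compact-smooth-fiber}, while $N_{h(x)}$ is cut out transversally by the $k$ independent functions $h_l - h_l(x)$ and so has codimension $k$ in $V$. Since $E_\lambda$ and $N_{h(x)}$ are transverse in $V$ and their dimensions sum to $\dim V = n$, the intersection $E_\lambda \cap N_{h(x)}$ is a smooth submanifold of dimension $0$. Because $E_\lambda$ is compact and $N_{h(x)}$ is closed in $V$, this intersection is compact, and a compact $0$-dimensional manifold is a finite set.

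For invariance of the cardinality, the plan is to produce explicit bijections by parallel transport. Given $\lambda,\lambda' \in \Lambda$, pick a smooth path $t \mapsto \lambda(t)$ in $\Lambda$ with $\lambda(0)=\lambda$, $\lambda(1)=\lambda'$ (available since $\Lambda$ is connected, hence path connected). For each $y \in E_\lambda \cap N_{h(x)}$, Proposition~\ref{prop-curve-lifting} delivers a unique lift $\gamma_y : [0,1] \to E$ with $\gamma_y(0)=y$; by construction of the lifting system, $h(\gamma_y(t))$ is constant, equal to $h(y)=h(x)$, so $\gamma_y(1) \in E_{\lambda'} \cap N_{h(x)}$. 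I would then define
\[
\Phi \colon E_\lambda \cap N_{h(x)} \longrightarrow E_{\lambda'} \cap N_{h(x)}, \qquad \Phi(y) = \gamma_y(1).
\]
Uniqueness in Proposition~\ref{prop-curve-lifting} makes $\Phi$ injective; applying the same construction to the reversed path $t \mapsto \lambda(1-t)$ supplies a two-sided inverse, so $\Phi$ is a bijection and both finite sets share the same cardinality.

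The main potential obstacle is to be sure that the lifted curve $\gamma_y$ never leaves the regime where $A(t)$ is invertible and where the non-degeneracy conditions of Section~2 hold, since otherwise the map $\Phi$ could fail to reach $E_{\lambda'}$. This is, however, exactly what Proposition~\ref{prop-curve-lifting} already establishes on the closed interval $[0,1]$, so no additional work is required. As a sanity check one could also note that $\Phi$ depends continuously on $y$, and that a continuous map from a finite set to a finite set that is inverted by another continuous map has to be a bijection; combined with connectedness of $\Lambda$, this gives a second, purely topological, derivation of the invariance.
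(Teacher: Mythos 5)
Your proposal is correct and follows essentially the same route as the paper: finiteness from transversality (Proposition~\ref{prop-transversality}) plus compactness of $E_\lambda$, and constancy of the cardinality from the uniqueness of the lifts of Proposition~\ref{prop-curve-lifting}, which stay on $N_{h(x)}$ and hence set up a bijection between $E_{\lambda}\cap N_{h(x)}$ and $E_{\lambda'}\cap N_{h(x)}$. Your version merely spells out the dimension count and the two-sided inverse via the reversed path a bit more explicitly than the paper does.
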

\begin{proof}
By proposition~\ref{prop-transversality}, the intersection $E_\lambda \cap N_{h(x)}$ is discrete, hence finite since $E_\lambda$ is compact. Let $\lambda_1,\lambda_2 \in \Lambda$ and consider a curve $\lambda(t)$, defined on $[0,1]$  joining these two points. For every $x \in E_{\lambda_1}$, there is a unique lift of $\lambda(t)$ drawn on $N_{h(x)}$ and starting at $x$, which ends at some $y \in E_{\lambda_2}$. Lifts starting from a point in $E_{\lambda_2} \cap N_{h(x)}$ are also uniquely defined. This sets up a one-to-one correspondence between the points of $E_{\lambda_1} \cap N_{h(x)}$ and those of $E_{\lambda_2} \cap N_{h(x)}$.
\end{proof}

\subsection{The natural connection}
\label{sec::natural_connection}

Let $\pi':E \rightarrow \R^n$ be the projection to the second summand. Consider again the function $h: \R^n \rightarrow \R^k$, ($h = (h_1, \cdots, h_k)$) and note that $h \circ \pi'$ is a submersion. Now for $a \in \R^k$, if $(h\circ\pi')^{-1} \neq \emptyset$ it is a submanifold of $E$ of dimension $m$. Let $u = (\lambda,x) \in (h \circ \pi')^{-1}(a)$. Then the tangent space of $(h \circ \pi')^{-1}(a)$ at $u$ is $\ker \left ( [0_{k,m},\frac{\partial h}{\partial x}(x)]   \right )$. 

\begin{Lemma}
The tangent space of $(h \circ \pi')^{-1}(a)$ at $u = (\lambda,x)$ is the complement of the tangent space of $E_\lambda$ at $x$ in $T_u E$. 
\end{Lemma}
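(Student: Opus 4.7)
The plan is to establish the direct-sum decomposition
$$
T_u E \;=\; T_x E_\lambda \,\oplus\, T_u(h\circ\pi')^{-1}(a),
$$
where as usual $T_x E_\lambda$ is identified with its image $\{0\}\times T_x E_\lambda$ inside $T_u(\Lambda\times V)$. I would prove this by a dimension count together with triviality of the intersection; the key analytic ingredient is the transversality already recorded in Proposition~\ref{prop-transversality}.

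For the dimension count, $\dim T_u E = m+k$ by Proposition~\ref{prop-Emanifold}, $\dim T_x E_\lambda = k$ by the non-degeneracy assumption on $\frac{\partial f}{\partial x}$ (cf.\ the proof of Proposition~\ref{prop-submersion}), and $\dim T_u(h\circ\pi')^{-1}(a) = m$ because $h\circ\pi'$ is a submersion from the $(m+k)$-dimensional manifold $E$ onto $\R^k$, as noted just above the lemma. These dimensions sum to $m+k = \dim T_u E$, exactly as the decomposition requires.

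For triviality of the intersection, I would take $v = (\alpha,\beta)$ lying in both subspaces. Being tangent to $E_\lambda$ forces $\alpha = 0$ and $\frac{\partial f}{\partial x}(\lambda,x)\,\beta = 0$; being tangent to $(h\circ\pi')^{-1}(a)$ viewed inside $T_u E$ further forces $\frac{\partial h}{\partial x}(x)\,\beta = 0$. Hence $\beta$ lies in the kernel of the stacked matrix
$$
A(u) \;=\; \begin{bmatrix} \dfrac{\partial f}{\partial x}(\lambda,x) \\[4pt] \dfrac{\partial h}{\partial x}(x) \end{bmatrix}
$$
already introduced in equation~(\ref{eq-curve-lifting-system}). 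By Proposition~\ref{prop-transversality}, $A(u)$ has full column rank $n$, so $\beta = 0$ and consequently $v = 0$.

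The main conceptual point — really the only obstacle — is the correct identification of the tangent spaces as subspaces of $T_u E$: one must not confuse the ambient kernel $\ker[\,0_{k,m},\,\frac{\partial h}{\partial x}(x)]\subset T_u M$ (of dimension $m+n-k$) with its intersection with $T_u E$ (of dimension $m$), which is what actually matches the level set $(h\circ\pi')^{-1}(a)$. Once this bookkeeping is in place and Proposition~\ref{prop-transversality} is invoked, the lemma follows immediately from the dimension count and the trivial-intersection verification above.
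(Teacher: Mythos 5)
Your proposal is correct and follows essentially the same route as the paper: the paper's one-line proof invokes equation~(\ref{eq-image-tangent-space-explicit}) together with non-degeneracy condition iii), which is exactly the content of your trivial-intersection step (the full rank of $A(u)$, i.e.\ $\ker\frac{\partial f}{\partial x}\cap\ker\frac{\partial h}{\partial x}=\{0\}$, is precisely $\ker\frac{\partial f}{\partial x}\cap\operatorname{im}\frac{\partial f}{\partial x}=\{0\}$), combined with the dimension count the paper leaves implicit. Your closing remark distinguishing $\ker[\,0_{k,m},\,\frac{\partial h}{\partial x}(x)]\subset T_uM$ from its intersection with $T_uE$ is a useful clarification of bookkeeping the paper glosses over, but it does not change the argument.
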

\begin{proof}
This is a direct consequence of equation~(\ref{eq-image-tangent-space-explicit}) and the non-degeneracy condition iii).
\end{proof}

This results shows that we can define a connection on $E$ choosing as the horizontal space:
$$
H_{u} = T_u((h \circ \pi')^{-1}(h(x))).
$$

We call it \textit{the natural connection of the equilibrium bundle}. 

\begin{Theorem}
The natural connection of the equilibrium bundle is flat (i.e. its curvature vanishes). 
\end{Theorem}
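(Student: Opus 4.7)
The plan is to invoke the standard characterization of flat Ehresmann connections: the curvature 2-form is (up to sign) the vertical component of the Lie bracket of horizontal lifts, and it vanishes identically if and only if the horizontal distribution $\mathcal{H}$ is involutive. Hence it suffices to prove that $\mathcal{H}$ is integrable, which I would do by exhibiting $\mathcal{H}$ as the tangent distribution of an explicit foliation of $E$.

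First I would observe that the map $h \circ \pi' : E \to \R^k$ is a submersion at every point $u = (\lambda,x) \in E$. Indeed, the lemma immediately preceding the theorem asserts that $T_u\bigl((h \circ \pi')^{-1}(h(x))\bigr)$ is the complement of $T_x E_\lambda$ in $T_u E$; since $\dim T_u E = m+k$ and $\dim T_x E_\lambda = k$, the fiber of $h \circ \pi'$ through $u$ is an $m$-dimensional submanifold of $E$, which means $d(h \circ \pi')_u$ is surjective. Consequently, the collection $\mathcal{F} = \{(h \circ \pi')^{-1}(a) : a \in h(\pi'(E))\}$ is a regular foliation of $E$ by smooth $m$-dimensional leaves.

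Next, by the very definition of the natural connection, at every $u = (\lambda,x) \in E$ the horizontal subspace $H_u$ coincides with the tangent space to the leaf of $\mathcal{F}$ through $u$, namely $(h \circ \pi')^{-1}(h(x))$. Thus $\mathcal{H}$ is globally the tangent distribution of $\mathcal{F}$. In particular, any two local smooth sections $X, Y$ of $\mathcal{H}$ are vector fields tangent to a common leaf of $\mathcal{F}$, so their Lie bracket $[X,Y]$ is also tangent to that leaf, i.e.\ takes values in $\mathcal{H}$. This shows that $\mathcal{H}$ is involutive.

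Applying the Frobenius theorem in the direction that characterizes flat Ehresmann connections, the vertical projection of $[X,Y]$ — which is precisely the curvature evaluated on $(X,Y)$ — vanishes for all horizontal $X, Y$. Hence the natural connection is flat. The only real point requiring care is the first step, verifying that $h \circ \pi'$ is a submersion on all of $E$ so that the level sets genuinely foliate $E$; once this is in place from the preceding lemma, the rest of the argument is tautological and requires no computation with connection forms or Christoffel symbols.
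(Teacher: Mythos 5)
Your proof is correct and takes essentially the same approach as the paper: both identify the horizontal distribution with the tangent distribution of the foliation of $E$ by the level sets $(h \circ \pi')^{-1}(a)$, and conclude that integrability of this distribution forces the curvature to vanish. Your version merely spells out the submersion and involutivity details that the paper's proof leaves implicit.
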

\begin{proof}
The curvature of a connection measures the failure of the horizontal distribution to be integrable. Here however, for $u \in E$ the subspace $H_u$ is tangent to an embedded submanifold of $E$. Therefore the distribution $(H_u)_u$ is indeed integrable and the curvature vanishes. 
\end{proof}

\begin{Theorem}
Parallel transport exists for the natural connection on the equilibrium bundle.
\end{Theorem}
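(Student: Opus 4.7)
The plan is to recognize that this theorem is essentially a corollary of the curve-lifting result of Proposition \ref{prop-curve-lifting}, so the real work consists in matching the abstract definition of horizontal lift with the concrete lift constructed there. Recall that parallel transport exists for a connection when, given any smooth path $\lambda(t) : [0,1] \to \Lambda$ and any initial point $u_0 = (\lambda(0), x_0) \in E_{\lambda(0)}$, there is a lift $\gamma : [0,1] \to E$ with $\gamma(0) = u_0$, $\pi \circ \gamma = \lambda$, and $\dot\gamma(t) \in H_{\gamma(t)}$ for all $t$.

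First I would translate the horizontality condition into a concrete geometric constraint. By the construction in Section~\ref{sec::natural_connection}, $H_u = T_u\bigl((h \circ \pi')^{-1}(h(\pi'(u)))\bigr)$; hence a horizontal curve through $u_0$ is tangent at every point to the level set $\{h \circ \pi' = h(x_0)\}$, and therefore lies entirely inside $E \cap N_{h(x_0)}$. Combined with $\pi \circ \gamma = \lambda$, this is precisely the pair of conditions that the lift of Proposition~\ref{prop-curve-lifting} satisfies, so the horizontal lift we seek and the curve-lift produced there are the same object, and in particular both are governed by the differential system \eqref{eq-curve-lifting-system}.

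I would then conclude simply by invoking Proposition~\ref{prop-curve-lifting}, which yields a unique such $\gamma$ defined on the full closed interval $[0,1]$. The only issue worth dwelling on is global existence versus short-time existence, but it does not really arise in our setting: transversality (Proposition~\ref{prop-transversality}) keeps the matrix $A(t)$ in \eqref{eq-curve-lifting-system} of maximal rank along the trajectory, while $\gamma(t)$ is forced to stay in the compact set $E \cap N_{h(x_0)} \subset V$, precluding any finite-time escape. Accordingly, the main conceptual point to flesh out is not the existence estimate but the dictionary between the abstract horizontal lifts of the natural connection and the level-set-preserving lifts of Proposition~\ref{prop-curve-lifting}; once that identification is in place, the theorem reads off immediately, and the parallel transport map $E_{\lambda(0)} \to E_{\lambda(1)}$ is well-defined and, by reversing the curve, a diffeomorphism.
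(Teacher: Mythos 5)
Your proposal is correct and follows essentially the same route as the paper: the paper's proof also reduces the statement to the explicit lift of Proposition~\ref{prop-curve-lifting}, observing that the curve constructed there lies in $E \cap (h\circ\pi')^{-1}(h(x))$ and is therefore horizontal for the natural connection (the paper additionally remarks that compactness of the fibers already guarantees parallel transport by a general theorem, but treats the explicit construction as the substance of the argument). Your added care in spelling out the dictionary between horizontality and level-set preservation, and in addressing global existence, only makes the same argument more explicit.
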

\begin{proof}
It is well-known that when the fibers are compact, parallel transport exists (see~\cite[204]{Michor-2008} for details). By proposition~\ref{prop-compact-smooth-fiber}, the fibers in our case are indeed compact, so this result applies. Here in fact proposition~\ref{prop-curve-lifting} makes parallel transport explicit.

Consider two points $\lambda_1,\lambda_2 \in \Lambda$ and a curve $\lambda(t)$ joining them. Consider a point $x$ in the fiber $E_{\lambda_1}$ and the manifold $(h \circ \pi')^{-1}(h(x))$. By proposition~\ref{prop-curve-lifting}, there exists a curve $\gamma(t)$ on $E \cap (h \circ \pi')^{-1}(h(x))$ starting at $x$, such that $\forall t, \pi(\gamma(t)) = \lambda(t)$ and $\dot{\gamma(t)} \in H_{(\lambda(t),\gamma(t))}$, which explicitly shows that parallel transport exists.     
\end{proof}

The connection being flat, assuming the base manifold $\Lambda$ is simply connected, the holonomy groups vanish, which we record in the following theorem.

\begin{Theorem}
\label{thm-trivial-holonomy}
Assuming that the base manifold $\Lambda$ is simply connected, the holonomy groups of the equilibrium bundle are trivial and the fiber bundle is trivial. 
\end{Theorem}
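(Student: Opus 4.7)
The plan is to establish the two assertions in sequence: first that all holonomy groups are trivial, and then to use this as a tool to build a global trivialization of the bundle.

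For the first assertion, fix a basepoint $\lambda_0 \in \Lambda$ and consider the holonomy group $\mathrm{Hol}(\lambda_0)$ acting on $F = E_{\lambda_0}$ by parallel transport along piecewise smooth loops based at $\lambda_0$. The key ingredient is the classical fact that parallel transport with respect to a flat connection depends only on the fixed-endpoint homotopy class of the path. The standard argument is to consider a smooth homotopy $H(s,t)$ between two paths with common endpoints and differentiate the parallel transport with respect to the homotopy parameter $s$; the resulting derivative is expressed as an integral of the curvature form, which vanishes by the previous theorem on flatness. Hence parallel transports along homotopic paths agree. Since $\Lambda$ is simply connected, every loop based at $\lambda_0$ is contractible to the constant loop, whose parallel transport is the identity. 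Therefore $\mathrm{Hol}(\lambda_0)=\{\mathrm{id}\}$, and by conjugation through transports this holds at every point of $\Lambda$.

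For the second assertion, I would use this path independence to build a global trivialization. For each $\lambda \in \Lambda$, choose any smooth path $\gamma_\lambda:[0,1]\to\Lambda$ with $\gamma_\lambda(0)=\lambda_0$ and $\gamma_\lambda(1)=\lambda$, and let $\tau_\lambda:F\to E_\lambda$ denote parallel transport along $\gamma_\lambda$. The triviality of holonomy shows $\tau_\lambda$ is independent of the path chosen. Set
\[
\Phi:\Lambda\times F\longrightarrow E,\qquad \Phi(\lambda,x)=\tau_\lambda(x).
\]
By construction $\pi\circ\Phi$ is projection on the first factor, and each restriction $\Phi(\lambda,\cdot)$ is a diffeomorphism $F\to E_\lambda$ since parallel transport along any path is a diffeomorphism of fibers.

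The main obstacle is to verify that $\Phi$ is smooth jointly in the two variables, not just fiberwise. I would argue locally: around any $\lambda^*\in\Lambda$ take a small open convex neighborhood $U$ in a chart of $\Lambda$. For $\lambda\in U$, the path $\gamma_\lambda$ may be taken as the concatenation of a fixed path $\gamma_{\lambda^*}$ with the straight segment from $\lambda^*$ to $\lambda$, depending smoothly on $\lambda$. The parallel transport along this family of paths is obtained by solving the ODE of Proposition~\ref{prop-curve-lifting}, whose right-hand side depends smoothly on the path and the initial datum. Standard smooth dependence of ODE solutions on parameters and initial conditions then yields smoothness of $\Phi$ on $U\times F$, hence globally. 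Since $\Phi$ is smooth, bijective, fiberwise a diffeomorphism, and the fibers $E_\lambda$ are compact manifolds of the same dimension, $\Phi$ is a diffeomorphism commuting with the projections. This exhibits $(E,\pi,\Lambda)$ as the trivial bundle $\Lambda\times F$.
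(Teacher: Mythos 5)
Your proof is correct, but it takes a genuinely different route from the paper's. The paper first shows that the holonomy groups are \emph{finite}: every horizontal lift stays on a level set $(h\circ\pi')^{-1}(a)$, and by Theorem~\ref{thm-fiber-cap-N} each fiber meets such a level set in the same finite number of points, so a loop admits only finitely many horizontal lifts; it then invokes the generalization of the Ambrose--Singer theorem to arbitrary fibre bundles of~\cite{Reckziegel-Wilehlmus-2006} to deduce from the vanishing curvature that the \emph{restricted} holonomy group is trivial, and finally uses simple connectedness to identify the full holonomy group with the restricted one. You instead use the classical fact that, for a flat and complete connection, parallel transport is invariant under fixed-endpoint homotopy, so simple connectedness kills the holonomy directly; note that in this general Ehresmann setting the cleanest justification is not the curvature-integral variation formula (which is really a principal/vector-bundle statement) but the Frobenius picture: the horizontal distribution integrates to the explicit leaves $E\cap(h\circ\pi')^{-1}(a)$, along which $\pi$ is a proper local diffeomorphism, hence a covering of the simply connected base. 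Your approach buys a self-contained argument avoiding the external citation, and, unlike the paper, you actually construct the global trivialization $\Phi(\lambda,x)=\tau_\lambda(x)$ and address its joint smoothness via smooth dependence on parameters of the ODE of Proposition~\ref{prop-curve-lifting} --- a point the paper leaves implicit (``this leads to the conclusion''). One small touch-up: a smooth bijection is not automatically a diffeomorphism, so to finish you should observe either that $d\Phi$ is everywhere invertible (it carries $T_\lambda\Lambda\oplus T_xF$ onto $H_u\oplus V_u$), or that the inverse of $\Phi$ is itself given by reverse parallel transport and is smooth by the same ODE argument.
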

\begin{proof}

The horizontal lift of a smooth path in $\Lambda$ entirely lies in $(h \circ \pi')^{-1}(a)$ for some $a$. Since every fiber intersects $h^{-1}(a)$ in the same finite number of points, there are finitely many horizontal lifts of a smooth loop. Therefore the holonomy groups are finite. Therefore they are Lie transformation groups of the fibers. Now since the curvature of the connection identically vanishes, the restricted holonomy groups are in fact trivial, applying a result of~\cite{Reckziegel-Wilehlmus-2006} which generalizes the Ambrose-Singer theorem to general fiber bundles.

Since the base space $\Lambda$ is assumed to simply connected, the holonomy group at a point is identical to the restricted holonomy group at this point. This leads to the conclusion.  
\end{proof}

\section{The natural connection via a Riemannian submersion}
\label{sec::riemannian_submersion}

Let us $\Phi$ be the natural connection form, $\Phi \in \Omega^1(E,TE)$ (that is for each $p \in E$ , we have $\Phi_p \in \mbox{End}(T_pE)$), which is the projection over the vertical bundle, so that for each $p \in E$, we have $\Phi_p \circ \Phi_p = \Phi_p$ and $\im(\Phi) = \mathcal{V}$, i.e. the vertical bundle. 

Let us define a Riemann metric on $E$ as follows:
$$
g(X,Y) = <\Phi(X),\Phi(Y)>_1 + <\pi_* \left(I-\Phi(X) \right), \pi_* \left(I-\Phi(Y) \right)>_2,
$$
where $<.,.>_1$ and $<.,.>_2$ respectively denote the standard inner product in $\R^m \times \R^n$ and in $\R^m$.  

When $E$ is endowed with this metric, as we shall assume from now, the projection $\pi$ becomes a Riemannian submersion. Indeed $H_u$ is the orthogonal complement of $V_u$ in $T_uE$ and the restriction $\pi_*:H_u \rightarrow T_{\pi(u)} \Lambda$ is an isometry.  

Therefore the natural connection coincides with the connection defined by $\pi$ as a Riemannian submersion. 

There is apparently no reason for which the parallel transport would define an isometry between two different fibers with the given metric. 

However, since by theorem~\ref{thm-trivial-holonomy}, the holonomy groups are trivial if we further assume that $\Lambda$ is simply connected, then in that case the parallel transport doesn't depend on the chosen curve on $\Lambda$. Therefore, we have a well defined diffeomorphism $\gamma_{\lambda_2\lambda_1}:E_{\lambda_1} \stackrel{\sim}{\rightarrow} E_{\lambda_2}$ between any two fibers, such that:
\begin{equation}
\label{eq-parallel-transp-cocycle}
\begin{array}{rcl}
\gamma_{\lambda \lambda} & = & Id_{E_{\lambda}} \\
\gamma_{\lambda_3 \lambda_1} & = & \gamma_{\lambda_3 \lambda_2} \circ \gamma_{\lambda_2 \lambda_1} 
\end{array}
\end{equation}
This observation allows transporting the metric from one fiber to another such that the parallel transport becomes an isometry between these two fibers. Choosing one fiber and transporting its metric to other fibers makes the parallel transport between any two fibers an isometry, thanks to the cocycle condition~(\ref{eq-parallel-transp-cocycle}).

\begin{Corollary}
If parallel transport is an isometry between fibers, the fibers $E_\lambda$ for all $\lambda \in \Lambda$ are totally geodesic in $E$ for the Levi-Civita connection induced by the metric on $E$. 
\end{Corollary}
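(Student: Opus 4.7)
The plan is to reduce the statement to a classical characterization (due to Hermann) of totally geodesic fibers in a Riemannian submersion: they occur precisely when parallel transport along horizontal curves is fiberwise isometric. The previous paragraphs have already established that $\pi:E\to\Lambda$ is a Riemannian submersion whose horizontal distribution equals that of the natural connection, so only the implication ``isometric parallel transport $\Rightarrow$ totally geodesic fibers'' remains to be proved.

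First I would recall that the second fundamental form of a fiber $E_\lambda \subset (E,g)$ is
\[
II(U,V) = \bigl(\nabla_U V\bigr)^{\mathcal{H}}, \qquad U,V \in \mathcal{V},
\]
where $\nabla$ is the Levi-Civita connection of $g$ and the superscript $\mathcal{H}$ denotes horizontal projection onto $H$. Thus total geodesy of $E_\lambda$ amounts to showing $(\nabla_U V)^{\mathcal{H}} = 0$ for all vertical fields $U,V$.

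Next, fix $u \in E$ and pick a basic horizontal field $X$ near $u$, i.e. the horizontal lift of a local vector field on $\Lambda$. Its flow $\phi_t$ coincides with parallel transport along the projected curve in $\Lambda$, so by hypothesis each $\phi_t$ restricts to an isometry between the fibers it connects. Extend given vertical vectors at $u$ to vertical fields $U,V$ that are $\phi_t$-invariant; then $[X,U]=[X,V]=0$, and $g(U,V)$ is constant along integral curves of $X$, i.e. $X\cdot g(U,V)=0$. A short manipulation using compatibility of $\nabla$ with $g$, the orthogonality $g(X,\mathcal{V}) = 0$, and the symmetry $(\nabla_U V - \nabla_V U)^{\mathcal{H}} = [U,V]^{\mathcal{H}} = 0$ (since $\mathcal{V}$ is involutive), yields
\[
X\cdot g(U,V) = -2\, g\bigl(X, (\nabla_U V)^{\mathcal{H}}\bigr).
\]
Hence $g\bigl(X,(\nabla_U V)^{\mathcal{H}}\bigr) = 0$ for every basic horizontal $X$. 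Since basic horizontal fields span $\mathcal{H}$ pointwise (the submersion being surjective), we conclude $(\nabla_U V)^{\mathcal{H}}(u) = 0$, which is the total geodesy of $E_{\pi(u)}$.

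The main obstacle is not a deep idea but careful bookkeeping: one must verify that the $\phi_t$-translates of vertical vectors remain vertical, which is exactly the content of the parallel transport construction of Section~\ref{sec::parallel_transport} and of the fact that $\phi_t$ maps fibers to fibers; and one must check that basic horizontal fields pointwise exhaust $\mathcal{H}$. Both points are standard in submersion theory and are essentially already contained in the setup established in the previous two sections.
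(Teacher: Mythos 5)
Your proposal is correct, and it proves exactly the statement that the paper's proof delegates to a citation: the paper's entire argument is the observation that isometric parallel transport between fibers of a Riemannian submersion forces the fibers to be totally geodesic, with a pointer to \cite[Proposition 1.1]{Ziller}. You instead reproduce the standard Hermann/O'Neill computation behind that proposition: take a basic horizontal field $X$ with flow $\phi_t$, extend vertical vectors to $\phi_t$-invariant vertical fields $U,V$ so that $[X,U]=[X,V]=0$ and $X\cdot g(U,V)=0$ by the isometry hypothesis, and then use metric compatibility, $g(X,\mathcal{V})=0$, and involutivity of $\mathcal{V}$ to obtain $X\cdot g(U,V)=-2\,g\bigl(X,(\nabla_U V)^{\mathcal{H}}\bigr)$, whence the second fundamental form vanishes. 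The identity and the bookkeeping you flag (verticality of $\phi_t$-translates, which follows because $\phi_t$ maps fibers to fibers, and the pointwise spanning of $\mathcal{H}$ by basic fields, which holds for any submersion) are all sound, and tensoriality of the second fundamental form makes the choice of extensions harmless. The net difference is only one of self-containedness: the paper's proof is a one-line reduction to the cited result, while yours makes the argument complete without external input; nothing in your route diverges mathematically from what the citation encapsulates.
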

\begin{proof}
Since parallel transport defines an isometry, between fibers, these latter are totally geodesic. See~\cite[proposition 1.1]{Ziller}.
\end{proof}

\section{The eigenvalues along the fibers}
\label{sec::eigenvalues}

The stability of an equilibrium point is controlled by the sign of the 
real parts of the eigenvalues of the Jacobian matrix. In this section we set up,  in the general 
framework of our discussion, a map at the level of the fundamental groups which can provide 
some information on these signs, or rather their changes.
First, note that the stability of an equilibrium is determined by the non-zero 
eigenvalues, since the system is constrained to evolve along the submanifold defined by the $k$ first integrals. 
We shall now consider the variation of the nonzero eigenvalues along a homotopy class of loops 
for a given value of the parameter.   

Consider a given value of $\lambda \in \Lambda$. It is natural to look at the map 
$e: (\lambda,x) \in E_\lambda \mapsto \mu$ where
$\mu=[\mu_1,\ldots,\mu_{n-k}]$ denotes the {\it set} of the nonzero
eigenvalues. In other words $\mu\in (\C^\star)^{n-k}/{\cal S}_{n-k}$
where for $p>0$, ${\cal S}_p$ denotes the permutation group on $p$
objects and the action here is by permutations of the factors.

Slightly more generally consider a manifold $Y$ and its $p$-fold product
$X=Y^p$ for some $p>0$. Let ${\cal S}={\cal S}_p$ act by permutation of
the factors and let $Z=X/{\cal S}$ denote the quotient. (Here and below,
for the sake of simplicity, the topological objects and morphisms are
assumed to be smooth.) One may consider $Z$ either as a topological
space, or as an orbifold. In this section we will make use of both
categories. The case at hand is relatively elementary because we are
dealing with a {\it global} quotient under the action of a {\it finite}
group. The theory of orbifolds is more general and adapted to 
the proper discontinuous action of a topological
groups. Given that in our example the application is fairly intuitive,
we refrain from recalling the details, referring instead to \cite{Thurston-78} for much more.

Given a (smooth) map $f: E\rightarrow Z$ where $E$ is a manifold, we may
consider the topological fundamental group (or functor rather) denoted
$\pi_1^{top}$ or just $\pi_1$, giving rise to a map $$\pi_1(f):
\pi_1(E)\rightarrow \pi_1(Z)$$ where $Z$ is considered as a topological
space. We may also consider the orbifold fundamental group $\pi_1^{orb}$
and the attending map $$\pi_1^{orb}(f): \pi_1^{orb}(E)\rightarrow
\pi_1^{orb}(Z).$$ But in fact $\pi_1^{orb}(E)=\pi_1^{top}(E)(=\pi_1(E))$
because $E$ is just a manifold (a ``trivial" orbifold). So we get two
maps $\pi_1(f)$ and $\pi_1^{orb}(f)$ with the same source, namely
$\pi_1(E)$, and respective targets $\pi_1(Z)$ and $\pi_1^{orb}(Z)$
according to whether $Z$ is considered in the category of topological
spaces or that of orbifolds.

There remains to compute the groups $\pi_1(Z)$ and $\pi_1^{orb}(Z)$,
which can be done easily (in our case), using \cite{Armstrong-68, Armstrong-82} 
to which we refer the reader. Indeed, assume that $G$ is a
topological group acting properly discontinuously on a connected smooth
manifold $X$. The quotient $Z=X/G$ exists as a topological space and
$\pi_1(X/G)$ is then an extension of $G/I$ by $\pi_1(X)$, where $I$ (the
letter stands for ``inertia'') is the normal subgroup of $G$ generated by
the elements whose action has fixed points in $X$. If $X=Y^p$ and
$G={\cal S}_p$ acts by permutations, it is easy to see that $I=G$ and so
$\pi_1(X/G)=\pi_1(X)=(\pi_1(Y))^p=\Z^p$ if $Y=\C^\star$.

On the other hand, $\pi_1^{orb}(X/G)$ is (essentially by definition) an
extension of $G$ itself by $\pi_1(X)$. Now if again $X=Y^p$ and $G={\cal
S}_p$ the extension splits and $\pi_1^{orb}(X/G)$ is given as a
semi-direct product $\pi_1^{orb}(X/G)=\mathcal{S}_p  \ltimes (\pi_1(Y))^p $ 
where again the action of ${\cal S}_p$ on $\pi_1(Y)^p$ is
by permutations of the factors. In our case $\pi_1(Y)=\Z$ and so
returning to the orginal situation we get a natural map
$$\pi_1^{orb}(e) : \pi_1(E_\lambda) \rightarrow \mathcal{S}_p \ltimes  \Z^p $$
which records both the monodromy of the individual (nonzero) eigenvalues
and their permutations.

What does this tell us about stability in this very general setting?
Let us  restrict attention to the topological fundamental group $\pi_1(\mu)=\Z^p$
($p=n-k$). Then for any nonzero element $(m_1, \ldots , m_p)$ in the image
of $\pi_1(e)$, with a nonzero component -- say --  $m_i$ for some index $i$, we can assert
that the eigenvalue $\mu_i$ has crossed the imaginary axis ($Re(z)=0$) at least
$2m_i$ times, so that the sign of its real part has changed at least that many times.
Although at this level of generality it seems difficult to extract more  
information from the map $e$, this may be feasible in specific, concrete cases.


\section{Examples and Applications}
\label{sec-examples}

Our first and motivating example concerns mathematical {\it biology} and consists in a circular network of cells i.e. a finite set of cells that are connected along a ring such that the last cell is connected to the first. An instance of this kind of model is the ribosome flow model on a ring, which has been introduced in~\cite{Raveh-all-2015}. 

Such a model is a parametric dynamical system defined as follows:
$$
\left \{ \begin{array}{ccl}
\dot{x}_1 & = & \lambda_n x_n(1-x_1) - \lambda_1 x_1(1-x_2) \\
\dot{x}_2 & = & \lambda_1 x_1(1-x_2) - \lambda_2 x_2(1-x_3) \\
\vdots & & \\
\dot{x}_{n-1} & = & \lambda_{n-2} x_{n-2}(1-x_{n-1}) - \lambda_{n-1} x_{n-1}(1-x_n) \\
\dot{x}_n & = & \lambda_{n-1} x_{n-1}(1-x_n) - \lambda_n x_n(1-x_1) 
\end{array} \right .
$$

The parameters $\lambda_1, \cdots, \lambda_n$ are real strictly positive numbers. They define the degree of diffusion between the cells. As shown in~\cite{Raveh-all-2015}, if the initial point lies in the hypercube $[0,1]^n$, then the system always stays within these limits. Then this system models the occupancy levels of a circular chain of $n$ sites (for example a circular DNA), while $\lambda_1, \cdots, \lambda_n$ are transition rates.   
In~\cite{Kaminski-2018}, the author analysed the structure of the equilibrium locus of this system.

\bigskip 

Coming to {\it dynamical systems} in general one is looking for systems such that  a) the trajectories are confined to a compact region of phase space b) they possess more or less natural first integrals (conserved quantities) and c) they display varieties of equlibria of positive dimensions. Requirements a) and b) are quite common and easy to satisfy : think of parametrized Hamiltonian systems and  the conservation of energy, global momentum, possibly angular momentum, or conserved quantities coming from some continuous group of geometric symmetries, via Noether's theorem. 

Requirement c) is less commonly fulfilled. The system which first comes to mind, stemming from {\it celestial mechanics}, the oldest and in some sense motivating domain in dynamical systems, is the so-called $N$-body problem, i.e. the system of differential equations which governs the motion (in ordinary three dimensional Euclidean space) of $N>1$ massive bodies interacting according to Newton's law of gravitation. This horrendously difficult problem (for $N>2$) was first studied in ``modern'' times by Lagrange and Euler, who isolated by now famous discrete sets of relative equilibria. Because the problem is so old, so natural and so difficult, literally dozens of variants of all kinds have been produced. All of them satisfy a) and b) and so does the original system. Some of them, especially those which involve blowups of certain singularities, also satisfy c) (see \cite{Devaney-1982}). A precise investigation of such systems would lead us too far afield. We hope to return to such and similar problems at some later time.

Finally we remark that it would also be interesting to explore the field of mathematical {\it economy}, where similar systems may exist in some abundance. Again such an exploration is postponed to later studies, possibly by other, more specialized authors.

\bigskip

In addition to these examples, one would consider the application of our results to control theory. Having first integrals, the system is not controllable. However lying on the manifold defined by fixed values $a = (a_1, \cdots, a_k) \in \R^k$ of the first integrals, there is a non denumerable set $E_a = E \cap \left ( \cap_{i=1}^k \pi'^{-1} \circ h_i^{-1}(a_i) \right )$  of possible equilibrium points according to the value of the parameter vector. The notations are defined in section~\ref{sec::natural_connection}. According to the discussion there, this set is actually a manifold of dimension $m$. 

By proposition~\ref{prop-curve-lifting}, every curve in $\Lambda$ can be lifted into a curve that lies in $E_a$. Therefore if the parameters are considered as commands, one can change the set of available equilibrium point(s) continuously. Then the stabilization around an equilibrium is a classical control problem that we do not recall here.

\bibliographystyle{plainnat}



\end{document}